\newcounter{alphthm}
\newtheorem{prop}{Proposition}[section]
\newtheorem{thm}{Theorem}[section]
\newtheorem{lem}[thm]{Lemma}
\newtheorem{cor}{Corollary}[section]
\theoremstyle{definition}
\newtheorem{ex}{Example}[section]
\newcommand{\be}{\begin{equation}}
\newcommand{\ee}{\end{equation}}
\title{On General Spherically Symmetric Finsler Metrics}
\author{T. Khani-Moghaddam$^{1}$, M. Rafie-Rad$^{1}$\footnote{Corresponding Author}, A. Tayebi$^{2}$
\\\footnotesize{$^{1}$ \textit{Department of Mathematics, University of  Mazandaran, Babolsar, Iran}}
\\\footnotesize{$^{2}$ \textit{Department of Mathematics, University of Qom, Qom, Iran}}
\\\footnotesize\textit{t.khanimoghaddam@stu.umz.ac.ir; rafie-rad@umz.ac.ir; akbar.tayebi@gmail.com}}
\numberwithin{equation}{section}
\begin{document}
\maketitle
\begin{abstract}
In this paper, we study some non-Riemannian curvature properties of general spherically symmetric Finsler metrics. First, we prove that every general spherically symmetric Finsler metric is semi {\it C}-reducible. Then, we find the necessary and sufficient condition under which a general spherically symmetric Finsler metric has vanishing mean stretch curvature.\\\\
{\bf {Keywords}}: General spherically symmetric Finsler metric, Cartan tensor, Landsberg curvature, stretch curvature, weakly stretch curvature.
\\\\
{\bf MSC 2010}: 53C60, 53C25.
\end{abstract}
\section{Introduction}
In 2003, Shen investigated one of the main problems in Finsler geometry, namely, classification of projectively flat Finsler metrics with constant flag curvature \cite{Sh}.  By this investigations, he success to express projectively flat Finsler metrics of constant flag curvature by Taylor expansions or some interesting algebraic formulas. One of the discovered metric can not be expressed in the usual known Finsler metrics. Indeed the obtained metric was not belong to the class $(\alpha,\beta)$-metrics. This Finsler metric defined as follows:
\begin{align}
F(x,y)=\bigg\lbrace1+\langle a,x\rangle +\frac{(1-|x|^2)\langle a,y\rangle}{A+\langle x,y\rangle}\bigg\rbrace\frac{(A+\langle x,y\rangle)^2}{(1-|x|^2)^2A},\label{SM}
\end{align}
where $x\in\mathbb{R}^n$, $y\in T_{x}\mathbb{R}^n$,  $|,|$ and $\langle ,\rangle$ are the standard Euclidean norm and inner product on $\mathbb{R}^n$, respectively. $|a|<1$ is a real constant and $A:=\sqrt{|y|^2-(|x|^2|y|^2-\langle x,y\rangle^2)}$. The metric \eqref{SM} is an interesting projectively flat Finsler metric with flag curvature ${\bf K}=0$. By putting $a=0$, one can get the square projective Finsler metric constructed by Berwald in \cite{B}.

Let us define $r:=|y|, u:=|x|^2, s:=\langle x,y\rangle/|y|, v=\langle a,x\rangle$ and $t:=\langle a,y\rangle/|y|$. Then, one can rewrite \eqref{SM} as follows
\begin{align}\label{eq2}
F=r\Big\lbrace1+v+\frac{(1-u)t}{\sqrt{1-u+s^2}+s}\Big\rbrace\frac{(\sqrt{1-u+s^2}+s)^2}{(1-u)^2\sqrt{1-u+s^2}}.
\end{align}
The metric $F$ in \eqref{eq2} is also given in the form $F=r\phi(u,s,v,t)$, where $a=a_{i}y^i$ is a constant 1-form  and  $\phi$ is a $C^\infty$ function. In a special case $a=0$, the it reduces to a spherically symmetric Finsler metric $F(x,y)=r\phi(u,s)$, where $(x,y)\in T\mathbb{B}^n(r_{0})\backslash\{0\}$. Considering this fact, the  Finsler metrics  $F=r\phi(u,s,v,t)$ are called general spherically symmetric Finsler metrics \cite{LL}. It is remarkable that, if $a\neq 0$, then the Finsler metric  $F=r\phi(u,s,v,t)$ is neither spherically symmetric nor general $(\alpha,\beta)$-metric \cite{LL}. This means that the class of general spherically symmetric Finsler metrics is a rich and important category of Finsler metrics.

The Cartan torsion of Finsler metrics measure their distance from Riemannian metrics. Also, the special faces of Catran torsion distinguish some Finsler metrics. In \cite{MS}, Matsumoto-H\={o}j\={o} proved that a Finsler metric on a manifold of dimension $n\geq3$ is a Randers metric or Kropina metric if and only if it is C-reducible. In \cite{MSh}, Matsumoto-Shibata showed that every $(\alpha,\beta)$-metric is semi-C-reducible. In \cite{TB}, Tayebi-Barati proved that every spherically symmetric Finsler metric $F=u\phi(r,s)$ on a domain $\mathcal{U}\subseteq\mathbb{R}^n$ is semi C-reducible. In the paper, we generalize all of mentioned results and prove the following.
\begin{thm}\label{THM1}
Every general spherically symmetric Finsler metric $F=r\phi(u,s,v,t)$ on a domain $\mathcal{U}\subseteq\mathbb{R}^n$ is semi $C$-reducible.
\end{thm}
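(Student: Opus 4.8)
The plan is to reduce the whole question to an explicit computation in a fixed algebraic frame and then to verify the defining identity of semi-$C$-reducibility by matching coefficients. Since $u=|x|^2$ and $v=\langle a,x\rangle$ are independent of $y$, the only $y$-dependence of $\phi$ enters through $s$ and $t$, so differentiating in $y$ produces, besides the radial direction $\ell_i:=y_i/|y|$, only two horizontal covectors; write $h^0_{ij}:=\delta_{ij}-\ell_i\ell_j$ and let $\xi_i:=r\,\partial_{y^i}s$ and $\eta_i:=r\,\partial_{y^i}t$ be the projections of $x_i,a_i$ onto $\ell^{\perp}$. The first step is to compute $g_{ij}=\tfrac12(F^2)_{y^iy^j}$ and to record it as a combination of $\ell_i\ell_j$, $h^0_{ij}$, $\ell_{(i}\xi_{j)}$, $\ell_{(i}\eta_{j)}$, $\xi_i\xi_j$, $\eta_i\eta_j$ and $\xi_{(i}\eta_{j)}$, whose coefficients are built from $\phi$ and its first and second $s,t$-derivatives.

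Second, I would form the angular metric $h_{ij}=g_{ij}-F_{y^i}F_{y^j}$: the radial and mixed terms cancel and one obtains the clean expression $h_{ij}=P\,h^0_{ij}+\phi\phi_{ss}\,\xi_i\xi_j+\phi\phi_{tt}\,\eta_i\eta_j+\phi\phi_{st}(\xi_i\eta_j+\xi_j\eta_i)$ with $P=\phi(\phi-s\phi_s-t\phi_t)$. Differentiating $g_{ij}$ once more gives $C_{ijk}=\tfrac12\partial_{y^k}g_{ij}$; since $g_{ij}$ is $0$-homogeneous in $y$ one has $C_{ijk}y^k=0$, which removes every term containing $\ell$. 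Thus $C_{ijk}$ lies in the six-dimensional space spanned by the symmetrized products $h^0_{(ij}\xi_{k)}$, $h^0_{(ij}\eta_{k)}$, $\xi_i\xi_j\xi_k$, $\xi_{(i}\xi_j\eta_{k)}$, $\xi_{(i}\eta_j\eta_{k)}$, $\eta_i\eta_j\eta_k$, and its six coefficients are just the $s$- and $t$-derivatives of those of $g_{ij}$.

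Third, I would compute the mean Cartan torsion $I_i=g^{jk}C_{ijk}$, most efficiently through $I_i=\partial_{y^i}\log\sqrt{\det(g_{jk})}$. The fundamental tensor preserves the splitting $\mathbb{R}^n=\mathrm{span}\{\ell,\xi,\eta\}\oplus\mathrm{span}\{\ell,\xi,\eta\}^{\perp}$, so $\det(g_{jk})=P^{\,n-3}D_3$ with $D_3$ the determinant of the $3\times 3$ block, and $I_i$ comes out as $\kappa_1\xi_i+\kappa_2\eta_i$ for explicit scalars $\kappa_1,\kappa_2$. It then remains to substitute $h_{ij}$ and $I_i$ into the template $C_{ijk}=\tfrac{p}{n+1}(h_{ij}I_k+h_{jk}I_i+h_{ki}I_j)+\tfrac{q}{\|I\|^2}I_iI_jI_k$ and to expand both sides in the same six-tensor basis; tracing forces $q=1-p$ (because $g^{jk}C_{ijk}=I_i$), so the claim becomes six scalar equations for the single remaining unknown $p$.

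The real obstacle is precisely this overdetermination: with $I_i$ already fixed there is one free function but six equations, so semi-$C$-reducibility can hold only because the coefficients of $g_{ij}$, $h_{ij}$, $C_{ijk}$ and $I_i$ satisfy several non-obvious differential identities — for instance the relation that forces the ratio $\kappa_1:\kappa_2$ of the mean-torsion components to coincide with $P_s:P_t$. These identities hold exactly because all four tensors descend from the single scalar $\phi(u,s,v,t)$, and checking them is the heart of the argument. Equivalently, one may recast the endgame as showing that the traceless tensor $C_{ijk}-\tfrac1{n+1}(h_{ij}I_k+h_{jk}I_i+h_{ki}I_j)$ is proportional to the traceless part of $I_iI_jI_k$; establishing this proportionality, and reading off the explicit $p$ and $q=1-p$, is where the computation must be carried out in full.
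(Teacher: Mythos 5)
Your setup coincides with the paper's own: your $\xi_i,\eta_i$ are its $s_i=x_i-s\,y_i/r$ and $t_i=a_i-t\,y_i/r$, your angular metric is exactly its
$h_{ij}=\sigma_1\phi(\delta_{ij}-r_ir_j)+\phi\phi_{ss}s_is_j+\phi\phi_{tt}t_it_j+\phi\phi_{st}(s_it_j+s_jt_i)$,
your six-term expansion of $C_{ijk}$ is its formula \eqref{eq102}, and your $I_i=\kappa_1\xi_i+\kappa_2\eta_i$ is its $I_k=\frac{1}{2r}(Hs_k+Kt_k)$. Up to that point you have reproduced the paper's computation.

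The genuine gap is that you stop exactly where the theorem has to be proved. You correctly observe that matching $C_{ijk}$ against the semi-$C$-reducible template yields six scalar equations for essentially one unknown $\mathcal P$ (with $\mathcal Q=1-\mathcal P$ forced by the trace), so that four surplus identities among the coefficients must hold; but you do not establish them, and the reason you offer --- that all the tensors ``descend from the single scalar $\phi$'' --- cannot be the explanation, since for any Finsler metric $g_{ij}$, $h_{ij}$, $C_{ijk}$, $I_i$ all descend from the single scalar $F$, yet most Finsler metrics are not semi-$C$-reducible. Concretely, already the first of your equations requires the ratio of the $\mathrm{Sym}(h^0\otimes\xi)$- and $\mathrm{Sym}(h^0\otimes\eta)$-components of $C_{ijk}$ (the paper's $\sigma_2:\sigma_3$) to equal $\kappa_1:\kappa_2$, and $\kappa_1,\kappa_2$ carry the additional terms $\|\mathfrak S\|^2\sigma_8+2\|\mathfrak R\|^2\sigma_{10}+\|\mathfrak T\|^2\sigma_{11}$, etc., so this is not an identity one gets for free. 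The paper closes this gap by a separate device: it evaluates the trace $C_{iik}$ in two ways, equations \eqref{C0} and \eqref{C1}, and extracts from their difference a pointwise linear relation $Ms_k=Nt_k$ between the two covectors; this lets it express $s_k$ and $t_k$ individually as multiples of $I_k$ and substitute back into \eqref{eq102}, collapsing all six basis tensors into $h_{(ij}I_{k)}$ and $I_iI_jI_k$ at once. Some such additional relation between $\xi_k$ and $\eta_k$ --- or an explicit verification of the four surplus identities, which you flag as ``the heart of the argument'' but leave undone --- is indispensable, and without it your proposal is a plan rather than a proof.
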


\bigskip

The notion of weakly  stretch metrics introduced by Najafi-Tayebi in \cite{NT} as an extension of stretch metrics. Every Finsler metric with vanishing mean stretch curvature is called weakly stretch metric. In \cite{CL}, Chen-Liu studied weakly stretch Randers metrics and proved that any weakly stretch Randers metrics must be of isotropic S-curvature. In \cite{TBS}, Tayebi-Bahadori-Sadeghi find a equation that characterizes weakly stretch spherically symmetric Finsler metrics. It is interesting to study general spherically symmetric with vanishing mean stretch curvature. In this paper, we prove the following.
\begin{thm}\label{THM2}
Let $F=r\phi(u,s,v,t)$ be a general spherically symmetric Finsler metric on a domain $\mathcal{U}\subseteq\mathbb{R}^n$. Then $F$ is a weakly stretch metric if and only if the following PDE holds:
\begin{align}\label{02}
rT(x^ia^j-x^ja^i)+(sT+Z)(a^iy^j-a^jy^i)+(W-tT)(x^iy^j-x^jy^i)=0,
\end{align}
where $T$, $Z$ and $W$ defined in section $4$.
\end{thm}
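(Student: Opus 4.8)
The plan is to reduce the weakly-stretch condition to a first-order differential identity. Recall that the mean stretch curvature of a Finsler metric can be written as $\overline{\Sigma}_{ij}=2(J_{i|j}-J_{j|i})$, where $J_i=g^{jk}L_{ijk}$ is the mean Landsberg curvature and $|$ denotes the horizontal covariant derivative; hence $F$ is weakly stretch if and only if $J_{i|j}=J_{j|i}$. The starting point is the standard identity $J_i=I_{i|m}y^m$, which expresses the mean Landsberg curvature as the horizontal derivative along the spray of the mean Cartan tensor $I_i=g^{jk}C_{ijk}$.

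First I would compute $I_i$ from the determinant formula $I_i=\frac{\partial}{\partial y^i}\ln\sqrt{\det(g_{kl})}$, using the factored expression for $\det(g_{kl})$ that already underlies the proof of Theorem \ref{THM1}. Since $u=|x|^2$ and $v=\langle a,x\rangle$ are independent of $y$, while $\partial_{y^i}r=y^i/r$, $\partial_{y^i}s=(x^i-sy^i/r)/r$ and $\partial_{y^i}t=(a^i-ty^i/r)/r$, the covector $I_i$ lands in the frame spanned by $x^i,y^i,a^i$ with scalar coefficients depending only on $r,u,s,v,t$. Next I would bring in the geodesic coefficients $G^i$ of $F$, which for general spherically symmetric metrics are known \cite{LL} to be a combination of $y^i,x^i,a^i$; from these the nonlinear connection $N^m_j=\partial G^m/\partial y^j$ is read off, and then
\[
J_i=y^m\partial_{x^m}I_i-2G^l\partial_{y^l}I_i-I_l\,\partial_{y^i}G^l
\]
is again of the form $J_i=\mathcal{P}x^i+\mathcal{Q}y^i+\mathcal{R}a^i$. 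These scalar coefficients are the building blocks from which the quantities $T$, $Z$ and $W$ of Section $4$ are formed.

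The decisive simplification is that the Berwald (equivalently Chern) connection is torsion-free, so its coefficients $\partial^2 G^m/\partial y^i\partial y^j$ are symmetric in $i,j$ and cancel in the antisymmetrization, giving
\[
J_{i|j}-J_{j|i}=\frac{\delta J_i}{\delta x^j}-\frac{\delta J_j}{\delta x^i},\qquad \frac{\delta}{\delta x^j}=\frac{\partial}{\partial x^j}-N^m_j\frac{\partial}{\partial y^m}.
\]
Differentiating $J_i$ with $\partial_{x^j}u=2x^j$, $\partial_{x^j}s=y^j/r$, $\partial_{x^j}v=a^j$, $\partial_{x^j}t=\partial_{x^j}r=0$, together with the $y$-derivatives above and the explicit $N^m_j$, every term becomes a scalar multiple of $\xi^i\eta^j$ with $\xi,\eta\in\{x,y,a\}$. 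Under antisymmetrization the symmetric products $x^ix^j$, $y^iy^j$, $a^ia^j$ vanish and only the three bivectors $x^iy^j-x^jy^i$, $a^iy^j-a^jy^i$ and $x^ia^j-x^ja^i$ survive; collecting their coefficients and matching with the definitions of Section $4$ yields exactly $W-tT$, $sT+Z$ and $rT$, so that $\overline{\Sigma}_{ij}=0$ becomes \eqref{02}.

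The main obstacle is the bookkeeping in this last computation. Two successive horizontal differentiations are involved, one already hidden in $J_i=I_{i|m}y^m$ and one in $\delta J_i/\delta x^j$, and both require the spray coefficients together with their first and second $y$-derivatives kept in fully explicit form. The conceptual content is light once the identity $J_{i|j}-J_{j|i}=\delta_jJ_i-\delta_iJ_j$ is in hand; the real effort is in verifying that the many resulting terms regroup precisely into the stated three-term combination with coefficients $rT$, $sT+Z$ and $W-tT$, leaving no residual terms.
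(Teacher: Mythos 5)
Your proposal follows essentially the same route as the paper: both antisymmetrize $J_{i|j}$ so that the symmetric Berwald coefficients $\partial^2G^m/\partial y^i\partial y^j$ drop out, reduce $\bar{\Sigma}_{ij}$ to $2\big(\partial_{x^j}J_i-\partial_{x^i}J_j\big)-2\big(N^m_j\partial_{y^m}J_i-N^m_i\partial_{y^m}J_j\big)$ with $N^m_j=\partial G^m/\partial y^j$, and then collect the coefficients of the three surviving bivectors $x^ia^j-x^ja^i$, $a^iy^j-a^jy^i$, $x^iy^j-x^jy^i$ after expressing everything in the frame $\{x,y,a\}$. The only (immaterial) difference is that you reach $J_i$ via the identity $J_i=I_{i|m}y^m$ and the log-determinant formula for $I_i$, whereas the paper computes $L_{jkl}=-\tfrac{1}{2}FF_{y^l}\,\partial^3G^l/\partial y^j\partial y^k\partial y^l$ explicitly and traces with $g^{ij}$ to obtain $J_i=H(x^i-s\,y^i/r)+K(a^i-t\,y^i/r)$; both land on the same decomposition, and the remaining work in either case is exactly the coefficient bookkeeping you acknowledge.
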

\section{Preliminary}
Let $M$ be a $C^{\infty}$-manifold of dimension $n$, and  $TM=\bigcup_{x \in M}T_{x}M$ denotes its tangent bundle. Let us define $TM_{0}:=TM\backslash\{0\}$ as the slit tangent bundle of $M$. Then, a Finsler metric on $M$ is a function $ F:TM\rightarrow [0,\infty )$ such that satisfies the following properties:\\
(i) $F$ is $C^\infty$ on $TM_0$;\\
(ii) $F$ is positively 1-homogeneous on the fibers of tangent  bundle $TM$, i.e., 
\[
F(x,\lambda y)=\lambda F(x,y), \ \ \  \forall \lambda>0.
\]
(iii) The quadratic form $\textbf{g}_y:T_xM \times T_xM\rightarrow \mathbb{R}$ is positive-definite on $T_xM$
\[
\textbf{g}_{y}(u,v):={1 \over 2}\frac{\partial^2}{\partial s\partial t} \Big[  F^2 (y+su+tv)\Big]_{s=t=0},\quad
u,v\in T_xM.
\]
The inner product $\textbf{g}_{y}$ is called the fundamental form in the direction $y$. The pair  $(M,F)$ is called a  Finsler manifold.

\bigskip

Let $x\in M$ and $F_x:=F|_{T_xM}$. To measure the non-Euclidean feature of $F_x$, one can define $\mathbf{C_y}: T_xM\times T_xM \times T_xM\rightarrow \mathbb{R}$ by
\begin{align}\label{0C}
\mathbf{C}_y(u,v,w):=\frac{1}{2}\frac{d}{dt}\Big[\mathbf{g}_{y+tw}(u,v)\Big]_{t=0},\quad u,v,w\in T_xM.
\end{align}
The family $\mathbf{C}:=\lbrace\mathbf{C}_y\rbrace_{y\in TM_0}$ is called the Cartan torsion. It is well known that $\mathbf{C}=0$ if and only if $F$ is Riemannian.

Also, for $y\in T_xM_0$, one can define $\mathbf{I}_y:T_xM\rightarrow\mathbb{R}$ by
\[
\mathbf{I}_y(u):=\sum_{i=1}^ng^{ij}(y)\mathbf{C}_y(u,\partial_i,\partial_j),
\]
where $\lbrace\partial_i\rbrace$ is a basis for $T_xM$ at $x\in M$. The family $\mathbf{I}:=\lbrace\mathbf{I}_y\rbrace_{y\in TM_0}$ is called the mean Cartan torsion. By definition, $\mathbf{I}_y(u):=I_i(y)u^i$, where $I_i=g^{jk}C_{ijk}$.

\bigskip

A non-Riemannian Finsler metric $F=F(x,y)$ on a manifold $M$ of dimension $n\geq 3$ is called semi-C-reducible if its Cartan tensor is given by following
\begin{align}\label{SC}
\mathbf{C}_y(u,v,w)=\frac{\mathcal{P}}{n+1}\Big\lbrace \mathbf{I}_y(u)\mathbf{h}_y(v,w)+\mathbf{I}_y(v)\mathbf{h}_y(u,w)+\mathbf{I}_y(w)\mathbf{h}_y(u,v)\Big\rbrace +\frac{\mathcal{Q}}{{\Vert \mathbf{I}\Vert}^2}\mathbf{I}_y(u)\mathbf{I}_y(v)\mathbf{I}_y(w),
\end{align}
where $\mathcal{P}=\mathcal{P}(x,y)$ and $\mathcal{Q}=\mathcal{Q}(x,y)$ are scalar functions on $TM$ satisfying $\mathcal{P}+\mathcal{Q}=1$ and ${\Vert \mathbf{I}\Vert}^2=I^mI_m$.  In local coordinates, we have
\begin{align}
C_{ijk}=\frac{\mathcal{P}}{n+1}\big\lbrace h_{ij}I_k+ h_{jk}I_i+ h_{ik}I_j\big\rbrace +\frac{\mathcal{Q}}{\Vert\mathbf{I}\Vert^2}I_iI_jI_k,
\end{align}
If $\mathcal{Q}=0$, then $F$ reduces to a C-reducible metric \cite{TPN}\cite{TS}. We remark that in \cite{MM0} Matsumoto was proved that every $(\alpha,\beta)$-metric on a manifold $M$ of dimension $n\geq3$ is semi C-reducible.

\bigskip

For $y\in T_xM$, define $\mathbf{L}_y:T_xM\times T_xM\times T_xM\rightarrow\mathbb{R}$ by $\mathbf{L}_y(u,v,w):=L_{ijk}u^iv^jw^k$, where
\begin{align}
L_{ijk}:=-\frac{1}{2}FF_{y^l}\frac{\partial^3G^l}{\partial y^i\partial y^j\partial y^k}.\label{L}
\end{align}
The quantity $\mathbf{L}$ is called the Landsberg curvature and $F$ is called a Landsberg metric if $\mathbf{L}=0$.

\bigskip

For $y\in T_xM$, define $\mathbf{J}_y:T_xM\rightarrow\mathbb{R}$ by
\begin{align}\label{Jg}
\mathbf{J}_y(u)=\sum_{i=1}^ng^{ij}(y)\mathbf{L}_y(u,\partial_i,\partial_j),
\end{align}
where $\lbrace\partial_i\rbrace$ is a basis for $T_xM$ at $x\in M$. The family $\mathbf{J}:=\lbrace\mathbf{J}_y\rbrace_{y\in TM_0}$ is called the mean Landsberg curvature. AFinsler metric $F$ is called a weakly Landsberg metric if $\mathbf{J}=0$.

\bigskip

The weakly stretch curvature $\bar{\Sigma}_y: T_xM\times T_xM\rightarrow\mathbb{R}$ is defined by $\bar{\Sigma}_y(u,v):=\bar{\Sigma}_{ij}(y)u^iv^j$, where
\begin{align}\label{barS}
\bar{\Sigma}_{ij}:=2(J_{i|j}-J_{j|i}).
\end{align}
where $``|"$ denotes the horizontal covariant derivation with respect to the Berward connection of $F$. A Finsler metric is said to be weakly stretch metric if $\bar{\Sigma}$=0.

\section{Proof of Theorem \ref{THM1}}
The fundamental tensor of   general spherically symmetric Finsler metric $F=r\phi(u,s,v,t)$ is given by
\begin{equation}\label{eq8}
\begin{split}
g_{ij}=&c_{0}\delta_{ij}+c_{1}a_{i}a_{j}+c_{2}\frac{y_{i}}{r}\frac{y_{j}}{r}+c_{3}\Big(a_{j}\frac{y_{i}}{r}+a_{i}\frac{y_{j}}{r}\Big)+c_{4}\Big(x_{j}\frac{y_{i}}{r}
+x_{i}\frac{y_{j}}{r}\Big)\\
&+c_{5}(a_{j}x_{i}+a_{i}x_{j})+c_{6}x_{i}x_{j},
\end{split}
\end{equation}
where
\begin{align*}
c_{0}&=\phi^2-s\phi\phi_{s}-t\phi\phi_{t},\\
c_{1}&=\phi_{t}^2+\phi\phi_{tt},\\
c_{2}&=s^2(\phi_{s}^2+\phi\phi_{ss})+t^2(\phi_{t}^2+\phi\phi_{tt})+2ts(\phi_{s}\phi_{t}+\phi\phi_{st})-s\phi\phi_{s}-t\phi\phi_{t},\\
c_{3}&=\phi\phi_{t}-s(\phi_{s}\phi_{t}+\phi\phi_{st}-t(\phi_{t}^2+\phi\phi_{tt}),\\
c_{4}&=\phi\phi_{s}-s(\phi_{s}^2+\phi\phi_{ss}-t(\phi_{s}\phi_{t}+\phi\phi_{st}),\\
c_{5}&=\phi_{s}\phi_{t}+\phi\phi_{st},\\
c_{6}&=\phi_{s}^2+\phi\phi_{ss}.
\end{align*}
For more details, see \cite{LL}.  Then, one can rewrite \eqref{eq8} as follows
\[
g_{ij}=c_{0}\left(F_{ij}+\gamma\frac{y_{i}}{r}\frac{y_{j}}{r}\right),
\]
where
\begin{align*}
&\gamma=-\frac{c_{3}}{c_{0}},\quad F_{ij}=E_{ij}+\theta N_{i}N_{j},\quad\theta=\frac{c_{5}}{c_{0}},\quad N_{i}=a_{i}+x_{i},\\
&E_{ij}=D_{ij}+\xi M_{i}M_{j},\quad\xi=\frac{c_{3}}{c_{0}},\quad M_{i}=a_{i}+\frac{y_{i}}{r},\\
&D_{ij}=B_{ij}+\epsilon a_{i}a_{j},\quad\epsilon=\frac{c_{1}-c_{3}-c_{5}}{c_{0}},\\
&B_{ij}=A_{ij}+\lambda L_{i}L_{j},\quad\lambda=\frac{c_{2}}{c_{0}},\quad L_{i}=\frac{c_{4}}{c_{2}}x_{i}+\frac{y_{i}}{r},\\
&A_{ij}=\delta_{ij}+\mu x_{i}x_{j},\quad\mu=\frac{c_{6}c_{2}-c_{4}^2-c_{5}c_{2}}{c_{0}c_{2}}.
\end{align*}
The inverse of the metric tensor is given by
\begin{align}\label{eq91}
g^{ij}=c_{0}^{-1}\Big\lbrace\delta^{ij}-\zeta x^ix^j-\tau L^iL^j-\nu(B^{ij})^2a_{i}a_{j}-\sigma M^iM^j-\kappa N^iN^j-\alpha(F^{ij})^2y_{i}y_{j}\Big\rbrace ,
\end{align}
where
\begin{eqnarray*}
\!\!&&\!\!\zeta =\frac{\mu}{1+\mu u},\\
\!\!&&\!\!\tau =\frac{\lambda}{1+\lambda L^2},\\
\!\!&&\!\! L^i=\omega x^i+\frac{y^i}{r},\\
\!\!&&\!\!\nu =\frac{\epsilon}{1+\epsilon a^2},\\\\
\!\!&&\!\! B^{ij}a_{j}=b_{1}x^i+b_{2}\frac{y^i}{r}+a^i,\\
\!\!&&\!\!\sigma =\frac{\xi}{1+\xi M^2},\\
\!\!&&\!\! M^i= d_{1}x^i+d_{2}\frac{y^i}{r}+d_{3}a^i,\\
\!\!&&\!\!\kappa =\frac{\theta}{1+\theta N^2},\\
\!\!&&\!\! N^i= e_{1}x^i+e_{2}\frac{y^i}{r}+e_{3}a^i,\\
\!\!&&\!\!\alpha =\frac{\gamma}{1+\gamma y^2},\\
\!\!&&\!\! F^{ij}y_j=f_{1}x^i+f_{2}\frac{y^i}{r}+f_{3}a^i.
\end{eqnarray*}
Since $\omega,L^2,b_{1},b_{2},a^2,d_{1},d_{2},d_{3},M^2,e_{1},e_{2},e_{3},N^2,f_{1},f_{2},f_{3}$ and $y^2$ are too long, they are listed in Appendix of \cite{RG}. Taking a vertical derivation of fundamental tensor gives us the Cartan torsion. In local coordinates, the Cartan torsion is given by
\begin{align}\label{1Cx}
C_{ijk}:=\frac{1}{2}\frac{\partial g_{ij}}{\partial y^k}.
\end{align}
Let $F=r\phi(u,s,v,t)$ be a general spherically symmetric Finsler metric in $\mathbb{R}^n$. By putting \eqref{eq8} into \eqref{1Cx}, one can get 
\begin{eqnarray}\label{eq9}
\nonumber C_{ijk}\!\!&=&\!\!\frac{1}{2r}\Big[\big(\phi\phi_{s}-s(\phi_{s}^2+\phi\phi_{ss})-t(\phi_{s}\phi_{t}+\phi\phi_{ts})\big)x^i\delta_{jk}
+\big(\phi\phi_{t}-s(\phi_{s}\phi_{t}+\phi\phi_{st})\\
\nonumber\!\!&&\!\! -t(\phi_{t}^2+\phi\phi_{tt})\big)a^i\delta_{jk}\Big](i\rightarrow j\rightarrow k\rightarrow i)+\frac{1}{2r^2}\Big[\big(s^2(\phi_{s}^2+\phi\phi_{ss})+t^2(\phi_{t}^2+\phi\phi_{tt})\\
\nonumber\!\!&&\!\! +2ts(\phi_{s}\phi_{t}+\phi\phi_{ts})-s\phi\phi_{s}-t\phi\phi_{t}\big)y^i\delta_{jk}\Big](i\rightarrow j\rightarrow k\rightarrow i)\\
\nonumber\!\!&&\!\! +\frac{1}{2r}\Big[(2\phi_{t}\phi_{ts}+\phi_{s}\phi_{tt}+\phi\phi_{tts})x^ia^ja^k
+(\phi_{t}\phi_{ss}+2\phi_{s}\phi_{ts}+\phi\phi_{sst})x^ix^ja^k\Big](i\rightarrow j\rightarrow k\rightarrow i)\\
\nonumber\!\!&&\!\! +\frac{1}{2r}\big(3\phi_{t}\phi_{tt}+\phi\phi_{ttt}\big)a^ia^ja^k-\frac{1}{2r^2}\big(s(2\phi_{t}\phi_{ts}+\phi_{s}\phi_{tt}+\phi\phi_{tts})\\
\nonumber\!\!&&\!\! +t(3\phi_{t}\phi_{tt}+\phi\phi_{ttt})\big)y^ia^ja^k(i\rightarrow j\rightarrow k\rightarrow i)
+\frac{1}{2r^3}\big(s^2(2\phi_{s}\phi_{st}+\phi_{t}\phi_{ss}+\phi\phi_{sst})\\
\nonumber\!\!&&\!\! +t^2(3\phi_{t}\phi_{tt}+\phi\phi_{ttt})+2ts(2\phi_{t}\phi_{st}
+\phi_{s}\phi_{tt}+\phi\phi_{stt})+s(\phi_{t}\phi_{s}+\phi\phi_{st})+t(\phi_{t}^2+\phi\phi_{tt})\\
\nonumber\!\!&&\!\! -\phi\phi_{t}\big)y^iy^ja^k(i\rightarrow j\rightarrow k\rightarrow i)-\frac{1}{2r^4}
\big(s^3(3\phi_{s}\phi_{ss}+\phi\phi_{sss})+t^3(3\phi_{t}\phi_{tt}+\phi\phi_{ttt})\\
\nonumber\!\!&&\!\! +3s^2(\phi_{s}^2+\phi\phi_{ss})+3t^2(\phi_{t}^2+\phi\phi_{tt})+3t^2s(2\phi_{t}\phi_{st}
+\phi_{s}\phi_{tt}+\phi\phi_{stt})\\
\nonumber\!\!&&\!\! +3s^2t(\phi_{t}\phi_{ss}+2\phi_{s}\phi_{ts}+\phi\phi_{sst})+6ts(\phi_{s}\phi_{t}+\phi\phi_{st})
-3s\phi\phi_{s}-3t\phi\phi_{t}\big)y^iy^jy^k\\
\nonumber\!\!&&\!\!+\frac{1}{2r^3}\big(s^2(3\phi_{s}\phi_{ss}+\phi\phi_{sss})+t^2(2\phi_{t}\phi_{ts}+\phi_{s}\phi_{tt}
+\phi\phi_{tts})+2ts(\phi_{t}\phi_{ss}+2\phi_{s}\phi_{ts}+\phi\phi_{sst})\\
\nonumber\!\!&&\!\! +s(\phi_{s}^2+\phi\phi_{ss})+t(\phi_{s}\phi_{t}+\phi\phi_{st})-\phi\phi_{s}\big)x^iy^jy^k-\frac{1}{2r^2}\Big[\big(s(3\phi_{s}\phi_{ss}+\phi\phi_{sss})\\
\nonumber\!\!&&\!\! +t(\phi_{t}\phi_{ss}+2\phi_{s}\phi_{ts}+\phi\phi_{sst})\big)x^ix^jy^k+\big(s(\phi_{t}\phi_{ss}+2\phi_{s}\phi_{st}
+\phi\phi_{sst})\\
\nonumber\!\!&&\!\! +t(2\phi_{t}\phi_{ts}+\phi_{s}\phi_{tt}+\phi\phi_{tts})\big)(x^ia^jy^k+x^iy^ja^k)\Big](i\rightarrow j\rightarrow k\rightarrow i)\\
\!\!&&\!\! +\frac{1}{2r}\big(3\phi_{s}\phi_{ss}+\phi\phi_{sss}\big)x^ix^jx^k,
\end{eqnarray}
where $i\rightarrow j\rightarrow k\rightarrow i$ denotes cyclic permutation.

\bigskip
For a general spherically symmetric Finsler metric $F=r\phi(u,s,v,t)$, let us put
\begin{equation}\label{eq92}
\begin{split}
r:=&|y|,\quad u:=\vert x\vert ^2,\quad s:=\frac{\langle x,y\rangle}{\vert y\vert},\quad v:=\langle a,x\rangle ,\quad t:=\frac{\langle a,y\rangle}{|y|},\\
r^i:=&r_i:=\frac{y^i}{r},\quad x^i:=x_i,\quad s^i:=s_i:=x_i-sr_i,\quad t^i:=t_i:=a_i-tr_i.
\end{split}
\end{equation}
Then, we have the following.
\begin{lem}\label{eq106}
Let $F=r\phi(u,s,v,t)$ be a general spherically symmetric Finsler metric on domain $\mathcal{U}\subseteq\mathbb{R}^n$.Then the following holds
\begin{align}\label{eq96}
g^{ij}s_is_j=c_0^{-1}&\Big[(u-s^2)-\mathfrak{A}(u-s^2)^2-2\mathfrak{B}(u-s^2)(v-st)-\mathfrak{C}(v-st)^2\Big],\\
\label{eq97}g^{ij}s_it_j=c_0^{-1}&\Big[(v-st)-\mathfrak{A}(u-s^2)(v-st)-\mathfrak{B}\big((v-st)^2+(u-s^2)(a^2-t^2)\big)\notag\\
&-\mathfrak{C}(v-st)(a^2-t^2)\Big],\\
\label{eq98}g^{ij}t_it_j=c_0^{-1}&\Big[(a^2-t^2)-\mathfrak{A}(v-st)^2-2\mathfrak{B}(a^2-t^2)(v-st)-\mathfrak{C}(a^2-t^2)^2\Big],
\end{align}
where
\begin{align*}
c_0&=\phi ^2-s\phi\phi _s -t\phi\phi _t,\\
\mathfrak{A}&=\zeta+\tau\omega^2+\nu b_1^2+\sigma d_1^2+\kappa e_1^2+\alpha f_1^2,\\
\mathfrak{B}&=\nu b_1+\sigma d_1d_3+\kappa e_1e_3+\alpha f_1f_3,\\
\mathfrak{C}&=\nu +\sigma d_3^2+\kappa e_3^2+\alpha f_3^2.
\end{align*}
\end{lem}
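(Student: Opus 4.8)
The plan is to contract the inverse metric \eqref{eq91} directly against the three pairs $s_is_j$, $s_it_j$ and $t_it_j$, exploiting the fact that $s^i$ and $t^i$ are Euclidean-orthogonal to the radial direction $r^i=y^i/r$. First I would record the elementary inner products implied by \eqref{eq92}. Since $\langle x,y\rangle/r=s$, $\langle a,y\rangle/r=t$ and $|y|=r$, a direct computation gives $r^is_i=0$ and $r^it_i=0$, together with
\begin{align*}
x^is_i=u-s^2,\quad a^is_i=v-st,\quad x^it_i=v-st,\quad a^it_i=a^2-t^2,
\end{align*}
and in particular $s^is_i=u-s^2$, $s^it_i=v-st$, $t^it_i=a^2-t^2$. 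These three quantities are exactly the leading terms appearing in \eqref{eq96}--\eqref{eq98}.

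The key structural observation is that every vector building the correction terms of \eqref{eq91} --- namely $x^i$, $L^i=\omega x^i+y^i/r$, $B^{ij}a_j=b_1x^i+b_2\tfrac{y^i}{r}+a^i$, $M^i$, $N^i$ and $F^{ij}y_j$ --- is a linear combination of the three basic vectors $x^i$, $r^i=y^i/r$ and $a^i$. Because $s^i$ and $t^i$ annihilate $r^i$, the $\tfrac{y^i}{r}$-component of each such vector disappears upon contraction, and only its $x^i$- and $a^i$-coefficients survive. Thus, writing a generic building vector as $V^i=Px^i+Qr^i+Ra^i$, I would use
\begin{align*}
V^is_i=P(u-s^2)+R(v-st),\qquad V^it_i=P(v-st)+R(a^2-t^2).
\end{align*}
Applying this to $x^i$ (with $P=1,R=0$), $L^i$ ($P=\omega,R=0$), $B^{ij}a_j$ ($P=b_1,R=1$), $M^i$ ($P=d_1,R=d_3$), $N^i$ ($P=e_1,R=e_3$) and $F^{ij}y_j$ ($P=f_1,R=f_3$) reduces the whole problem to bookkeeping; note that the $r^i$-coefficients $b_2,d_2,e_2,f_2$ never enter.

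Carrying out the contractions, substituting these values into \eqref{eq91} and expanding the resulting squares (and mixed products in the $s_it_j$ case) produces in each case a quadratic form evaluated on the pair $(u-s^2,v-st)$ for $s^i$ and on $(v-st,a^2-t^2)$ for $t^i$. Collecting coefficients, the six correction terms assemble into a single symmetric matrix $\left(\begin{smallmatrix}\mathfrak{A}&\mathfrak{B}\\\mathfrak{B}&\mathfrak{C}\end{smallmatrix}\right)$: the coefficient of the $x$--$x$ products is $\zeta+\tau\omega^2+\nu b_1^2+\sigma d_1^2+\kappa e_1^2+\alpha f_1^2=\mathfrak{A}$, the $x$--$a$ cross coefficient is $\nu b_1+\sigma d_1d_3+\kappa e_1e_3+\alpha f_1f_3=\mathfrak{B}$, and the $a$--$a$ coefficient is $\nu+\sigma d_3^2+\kappa e_3^2+\alpha f_3^2=\mathfrak{C}$. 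These are precisely the definitions in the statement, and \eqref{eq96}--\eqref{eq98} fall out after multiplying through by $c_0^{-1}$.

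The only genuine obstacle here is organizational rather than conceptual: across six correction terms and three distinct contractions one must track how the scattered products regroup into the symmetric combinations $\mathfrak{A},\mathfrak{B},\mathfrak{C}$. The orthogonality $s^i,t^i\perp r^i$ is what keeps this manageable, since it discards all radial coefficients from the outset and collapses each building vector to a two-parameter datum $(P,R)$; once that is noticed, the remainder is a careful but routine expansion.
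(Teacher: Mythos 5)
Your proposal is correct and follows essentially the same route as the paper: contract the inverse metric \eqref{eq91} against $s_i$, $t_i$ using the orthogonality relations $r_is^i=r_it^i=0$ and the products $x^is_i=u-s^2$, $a^is_i=x^it_i=v-st$, $a^it_i=a^2-t^2$, then collect the surviving coefficients into $\mathfrak{A}$, $\mathfrak{B}$, $\mathfrak{C}$. Your generic $(P,R)$ bookkeeping for the building vectors is just a cleaner organization of the same computation the paper carries out term by term.
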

\begin{proof}
It  is easy to see that the following hold
\begin{align}\label{eq93}
r^ir_i=1,\quad x^ir_i=s, \quad a^ir_i=t,\quad r_iy^i=r,\quad r_is^i=0,\quad r_it^i=0.
\end{align}
 Contracting \eqref{eq91} with $s_j$ and using \eqref{eq92} and \eqref{eq93} in it implies that
\begin{align}\label{eq94}
g^{ij}s_j=c_0^{-1}&\Big[(x^i-sr^i)-(u-s^2)\big(\zeta x^i+\tau L^i\omega +\nu(B^{ij}a_j)b_1+\sigma M^id_1+\kappa N^ie_1\notag\\
&+\alpha (F^{ij}y_j)f_1\big)-(v-st)\big(\nu(B^{ij}a_j)+\sigma M^id_3+\kappa N^ie_3+\alpha (F^{ij}y_j)f_3\big)\Big].
\end{align}
Multiplying \eqref{eq94} with $s_i$ yields
\begin{align}\label{eq95}
g^{ij}s_is_j=c_0^{-1}&\Big[(x^i-sr^i)-(u-s^2)\big(\zeta x^i+\tau L^i\omega +\nu(B^{ij}a_j)b_1+\sigma M^id_1+\kappa N^ie_1+\alpha (F^{ij}y_j)f_1\big)\notag\\
&-(v-st)\big(\nu(B^{ij}a_j)+\sigma M^id_3+\kappa N^ie_3+\alpha (F^{ij}y_j)f_3\big)\Big](x_i-sr_i).
\end{align}
By putting \eqref{eq92} and \eqref{eq93} in \eqref{eq95}, we get \eqref{eq96}. By a similar method, one can  obtain \eqref{eq97} and \eqref{eq98}.
\end{proof}

\bigskip

\begin{lem}\label{I}
Let $F=r\phi(u,s,v,t)$ be a general spherically symmetric Finsler metric on a domain $\mathcal{U}\subseteq\mathbb{R}^n$. Then the norm of mean Cartan torsion of $F$ is given by following
\begin{align}\label{eq105}
\|\mathbf{I}\|^2=\frac{1}{4r^2}\Big\lbrace\|\mathfrak{S}\|^2 H^2+2\|\mathfrak{R}\|^2 HK+\|\mathfrak{T}\|^2 K^2\Big\rbrace ,
\end{align}
where
\[
\|\mathfrak{S}\|^2:=g^{ij}s_is_j,\quad \|\mathfrak{R}\|^2:=g^{ij}s_it_j,\quad \|\mathfrak{T}\|^2:=g^{ij}t_it_j
\]
and
\begin{align*}
H:=&(n+1)\Big(\frac{\phi_s}{\phi}+\frac{\sigma_{1s}}{\sigma_1}\Big)+\|\mathfrak{S}\|^2\Big(\phi_{sss}
-\frac{3\sigma_{1s}\phi_{ss}}{\sigma_1}\Big)\phi
+2\|\mathfrak{R}\|^2\Big(\phi_{sst}-\frac{2\sigma_{1s}\phi_{st}+\sigma_{1t}\phi_{ss}}{\sigma_1}\Big)\phi\\
&+\|\mathfrak{T}\|^2\Big(\phi_{tts}-\frac{2\sigma_{1t}\phi_{ts}+\sigma_{1s}\phi_{tt}}{\sigma_1}\Big)\phi,\\
K:=&(n+1)\Big(\frac{\phi_t}{\phi}+\frac{\sigma_{1t}}{\sigma_1}\Big)+\|\mathfrak{T}\|^2\Big(\phi_{ttt}-\frac{3\sigma_{1t}\phi_{tt}}{\sigma_1}\Big)\phi +\|\mathfrak{S}\|^2\Big(\phi_{sst}-\frac{2\sigma_{1s}\phi_{st}+\sigma_{1t}\phi_{ss}}{\sigma_1}\Big)\phi\\
&+2\|\mathfrak{R}\|^2\Big(\phi_{tts}-\frac{2\sigma_{1t}\phi_{ts}+\sigma_{1s}\phi_{tt}}{\sigma_1}\Big)\phi.
\end{align*}
Here, $\sigma_1:=\phi -s\phi_s -t\phi_t$.
\end{lem}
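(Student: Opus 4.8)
The plan is to bypass the unwieldy expression \eqref{eq9} altogether and instead exploit the fact that the mean Cartan torsion is a logarithmic derivative of the metric determinant. Since $C_{ijk}=\tfrac12\,\partial g_{ij}/\partial y^k$ is totally symmetric, one also has $C_{ijk}=\tfrac12\,\partial g_{jk}/\partial y^i$, so by Jacobi's formula
\[
I_i=g^{jk}C_{ijk}=\tfrac12\,g^{jk}\frac{\partial g_{jk}}{\partial y^i}=\tfrac12\,\frac{\partial}{\partial y^i}\ln\det(g_{jk}).
\]
First I would record the elementary identities $\partial s/\partial y^i=s_i/r$ and $\partial t/\partial y^i=t_i/r$, which follow from $s=\langle x,y\rangle/r$ and $t=\langle a,y\rangle/r$ together with \eqref{eq92}--\eqref{eq93}, while $u=|x|^2$ and $v=\langle a,x\rangle$ are independent of $y$. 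Because $\det(g_{jk})$ is an $O(n)$-invariant, $0$-homogeneous function of $y$ built from $x,a,y$, it depends on $y$ only through $s$ and $t$. Hence $I_i=\tfrac1{2r}\big(Hs_i+Kt_i\big)$ with $H:=\partial_s\ln\det g$ and $K:=\partial_t\ln\det g$; in particular $I_iy^i=0$, consistent with $s_iy^i=t_iy^i=0$.

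The core step is to compute $\det g$ and its $s,t$-derivatives. Here I would use the nested rank-one decomposition displayed just after \eqref{eq8}: writing $g_{ij}=c_0(F_{ij}+\gamma r_ir_j)$ and peeling off $F,E,D,B,A$ one factor at a time, the matrix determinant lemma gives
\[
\det g=c_0^{\,n}\,(1+\mu u)(1+\lambda L^2)(1+\epsilon a^2)(1+\xi M^2)(1+\theta N^2)(1+\gamma y^2),
\]
whose six factors are exactly the denominators appearing in $\zeta,\tau,\nu,\sigma,\kappa,\alpha$ of \eqref{eq91}. Taking $\ln$ turns this product into a sum, and differentiating in $s$ and $t$ yields $H$ and $K$. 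Using the key simplification $c_0=\phi(\phi-s\phi_s-t\phi_t)=\phi\,\sigma_1$, the term $n\,\partial_s\ln c_0$ already produces $n(\phi_s/\phi+\sigma_{1s}/\sigma_1)$; the remaining $+1$ promoting this to the coefficient $n+1$, together with all the third-order pieces $\phi_{sss},\phi_{sst},\dots$, must emerge from the logarithmic derivatives of the six update factors after substituting the definitions of $\mu,\lambda,\epsilon,\xi,\theta,\gamma$ in terms of $c_0,\dots,c_6$. The quadratic combinations that survive reorganize precisely into $\|\mathfrak S\|^2,\|\mathfrak R\|^2,\|\mathfrak T\|^2$, since the ``norm'' quantities $L^2,M^2,N^2,y^2$ are themselves the $g^{ij}$-contractions handled in Lemma \ref{eq106}.

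With $I_i=\tfrac1{2r}(Hs_i+Kt_i)$ in hand, the conclusion is immediate:
\[
\|\mathbf I\|^2=g^{ij}I_iI_j=\frac1{4r^2}\Big(H^2\,g^{ij}s_is_j+2HK\,g^{ij}s_it_j+K^2\,g^{ij}t_it_j\Big),
\]
and substituting $g^{ij}s_is_j=\|\mathfrak S\|^2$, $g^{ij}s_it_j=\|\mathfrak R\|^2$, $g^{ij}t_it_j=\|\mathfrak T\|^2$ from Lemma \ref{eq106} reproduces \eqref{eq105} exactly. The main obstacle is entirely in the middle step: reducing $\partial_s\ln\det g$ and $\partial_t\ln\det g$ from the raw product formula to the compact forms claimed for $H$ and $K$. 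This is a lengthy but mechanical manipulation in which one must differentiate each update factor, repeatedly invoke $c_0=\phi\sigma_1$ and the explicit expressions for $c_1,\dots,c_6$, and recognize the recurring quadratic combinations as the three norms; the delicate bookkeeping point is arranging the two first-order contributions so that they merge into the single coefficient $n+1$.
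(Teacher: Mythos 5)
Your overall strategy is legitimate and genuinely different from the paper's. The identity $I_i=\tfrac12\,\partial_{y^i}\ln\det(g_{jk})$ is valid, the observation that $\det(g_{jk})$ depends on $y$ only through $s$ and $t$ correctly forces $I_i=\tfrac1{2r}(Hs_i+Kt_i)$ with $H=\partial_s\ln\det g$ and $K=\partial_t\ln\det g$, and the final contraction $\|\mathbf I\|^2=g^{ij}I_iI_j$ then yields \eqref{eq105} verbatim once Lemma \ref{eq106} is invoked. The paper proceeds differently: it rewrites $g_{ij}$ in the adapted form \eqref{eq99}, differentiates to express $C_{ijk}$ through the angular metric and $s_i,t_i$ as in \eqref{eq102}, and obtains $H$ and $K$ by tracing with $g^{jk}$ (the coefficient $n+1$ arising from $g^{ij}h_{ij}=n-1$ together with $2h^j{}_k s_j=2s_k$); from \eqref{eq103} the stated third-order coefficients of $H$ and $K$ follow by a short, checkable simplification using $c_0=\phi\sigma_1$ and $\sigma_2=\sigma_1\phi_s+\phi\sigma_{1s}$.

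The gap is that your middle step --- the only place where the specific formulas for $H$ and $K$ could be established --- is asserted rather than carried out, and the one concrete claim you make about it is incorrect. In the matrix determinant lemma applied to the nested decomposition, the successive factors are $1+\lambda L^{\top}A^{-1}L$, $1+\xi M^{\top}D^{-1}M$, and so on: the quantities $L^2,M^2,N^2,y^2$ appearing in $\tau,\sigma,\kappa,\alpha$ of \eqref{eq91} are contractions against the intermediate inverses $A^{-1},B^{-1},D^{-1},E^{-1},F^{-1}$, not against $g^{ij}$, so they are not the norms $\|\mathfrak S\|^2,\|\mathfrak R\|^2,\|\mathfrak T\|^2$ of Lemma \ref{eq106}, and the promised reorganization of $\partial_s\ln\det g$ into $(n+1)\big(\phi_s/\phi+\sigma_{1s}/\sigma_1\big)+\|\mathfrak S\|^2(\cdots)+\cdots$ does not follow by inspection. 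Since the explicit forms of $H$ and $K$ are the entire content of the lemma (the closing display is a one-line consequence), deferring this identification as ``lengthy but mechanical'' leaves the lemma unproved; closing it along your route would require substituting the appendix expressions for $\omega,b_1,b_2,d_1,d_2,d_3,e_1,e_2,e_3,f_1,f_2,f_3$ and the various squared norms and verifying the identity, a computation substantially heavier than the paper's trace argument.
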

\begin{proof}
By simple calculations, we get
\begin{align}
s_iy^i=0,\quad s_is^i=u-s^2,\quad t_iy^i=0,\quad t_it^i=a^2-t^2,\quad s_it^i=v-st.
\end{align}
A direct computation shows that the angular metric of $F$ is given by the following
\[
h_{ij}=\sigma_1(\delta_{ij}-r_ir_j)\phi+\phi\phi_{ss}s_is_j+\phi\phi_{tt}t_it_j+\phi\phi_{st}(s_it_j+s_jt_i).
\]
\eqref{eq8} can be rewritten in the following simple form
\begin{equation}
\begin{split}\label{eq99}
g_{ij}=&\sigma_1\phi\delta_{ij}+(s\phi\phi_s+t\phi\phi_t)r_ir_j+\phi\phi_s(s_ir_j+s_jr_i)+(\phi_s^2+\phi\phi_{ss})s_is_j\\
&+\phi\phi_t(t_ir_j+t_jr_i)+(\phi_t^2+\phi\phi_{tt})t_it_j+(\phi_s\phi_t+\phi\phi_{st})(s_it_j+s_jt_i).
\end{split}
\end{equation}
Taking a vertical derivation  of \eqref{eq99} yields
\begin{equation}
\begin{split}\label{eq100}
2rC_{ijk}=&\frac{\sigma_2}{\sigma_1\phi}\Big\lbrace h_{ij}s_k+h_{jk}s_i+h_{ki}s_j\Big\rbrace+\frac{1}{\sigma_1}\Big\lbrace \sigma_1\sigma_4-3\sigma_2\phi_{ss}\Big\rbrace s_is_js_k\\
&+\frac{\sigma_3}{\sigma_1\phi}\Big\lbrace h_{ij}t_k+h_{jk}t_i+h_{ki}t_j\Big\rbrace+\frac{1}{\sigma_1}\Big\lbrace \sigma_1\sigma_5-3\sigma_3\phi_{tt}\Big\rbrace t_it_jt_k\\
&+\frac{1}{\sigma_1}\Big\lbrace \sigma_1\sigma_6-2\sigma_2\phi_{st}-\sigma_3\phi_{ss}\Big\rbrace s_is_jt_k\ \  (i\rightarrow j\rightarrow k\rightarrow i)\\
&+\frac{1}{\sigma_1}\Big\lbrace \sigma_1\sigma_7-2\sigma_3\phi_{ts}-\sigma_2\phi_{tt}\Big\rbrace  t_it_js_k\ \ (i\rightarrow j\rightarrow k\rightarrow i),
\end{split}
\end{equation}
where
\begin{align*}
\sigma_2&:=(\phi-s\phi_s-t\phi_t)\phi_s-s\phi\phi_{ss}-t\phi\phi_{ts},\\
\sigma_3&:=(\phi-s\phi_s-t\phi_t)\phi_t-s\phi\phi_{st}-t\phi\phi_{tt},\\
\sigma_4&:=3\phi_s\phi_{ss}+\phi\phi_{sss},\\\\
\sigma_5&:=3\phi_t\phi_{tt}+\phi\phi_{ttt},\\
\sigma_6&:=2\phi_s\phi_{st}+\phi_t\phi_{ss}+\phi\phi_{sst},\\
\sigma_7&:=2\phi_t\phi_{ts}+\phi_s\phi_{tt}+\phi\phi_{tts}.
\end{align*}
Multiplying \eqref{eq100} with $\sigma_1\phi$ give us
\begin{equation}
\begin{split}\label{eq101}
2\sigma_1 r\phi C_{ijk}=&\sigma_2\Big\lbrace h_{ij}s_k+h_{jk}s_i+h_{ki}s_j\Big\rbrace+\phi\Big\lbrace \sigma_1\sigma_4-3\sigma_2\phi_{ss}\Big\rbrace s_is_js_k\\
&+\sigma_3\Big\lbrace h_{ij}t_k+h_{jk}t_i+h_{ki}t_j\Big\rbrace+\phi\Big\lbrace \sigma_1\sigma_5-3\sigma_3\phi_{tt}\Big\rbrace t_it_jt_k\\
&+\phi\Big\lbrace\sigma_1\sigma_6-2\sigma_2\phi_{st}-\sigma_3\phi_{ss}\Big\rbrace s_is_jt_k \ \ (i\rightarrow j\rightarrow k\rightarrow i)\\
&+\phi\Big\lbrace \sigma_1\sigma_7-2\sigma_3\phi_{ts}-\sigma_2\phi_{tt}\Big\rbrace t_it_js_k \ \ (i\rightarrow j\rightarrow k\rightarrow i).
\end{split}
\end{equation}
Let us put
\begin{align*}
\sigma_8&:=\phi(\sigma_1\sigma_4-3\sigma_2\phi_{ss}),\\
\sigma_9&:=\phi(\sigma_1\sigma_5-3\sigma_3\phi_{tt}),\\
\sigma_{10}&:=\phi(\sigma_1\sigma_6-2\sigma_2\phi_{st}-\sigma_3\phi_{ss}),\\
\sigma_{11}&:=\phi(\sigma_1\sigma_7-2\sigma_3\phi_{ts}-\sigma_2\phi_{tt}),\\
\sigma_{12}&:=r\phi(\phi-s\phi_s-t\phi_t).
\end{align*}
Therefore, \eqref{eq101} can be written as follows
\begin{equation}
\begin{split}
2\sigma_{12}C_{ijk}=&\sigma_2\Big\lbrace h_{ij}s_k+h_{jk}s_i+h_{ki}s_j\Big\rbrace+\sigma_3\Big\lbrace h_{ij}t_k+h_{jk}t_i+h_{ki}t_j\Big\rbrace+\sigma_8 s_is_js_k\\
&+\sigma_9 t_it_jt_k+(\sigma_{10}s_is_jt_k+\sigma_{11} t_it_js_k)(i\rightarrow j\rightarrow k\rightarrow i),
\end{split}
\end{equation}
or equivalently
\begin{equation}
\begin{split}\label{eq102}
C_{ijk}=&\frac{\sigma_{2}}{2\sigma_{12}}\Big\lbrace h_{ij}s_k+h_{jk}s_i+h_{ki}s_j\Big\rbrace+\frac{\sigma_3}{2\sigma_{12}}\Big\lbrace h_{ij}t_k+h_{jk}t_i+h_{ki}t_j\Big\rbrace+\frac{\sigma_8}{2\sigma_{12}} s_is_js_k\\
&+\frac{\sigma_9}{2\sigma_{12}}t_it_jt_k+\frac{1}{2\sigma_{12}}\Big(\sigma_{10}s_is_jt_k+\sigma_{11} t_it_js_k\Big)(i\rightarrow j\rightarrow k\rightarrow i).
\end{split}
\end{equation}
Taking a trace of \eqref{eq102} yields
\begin{align}\label{eq103}
I_k=&\bigg[\frac{(n+1)\sigma_2+ \|\mathfrak{S}\|^2\sigma_8 +2\|\mathfrak{R}\|^2\sigma_{10}+\|\mathfrak{T}\|^2\sigma_{11}}{2\sigma_{12}}\bigg]s_k\notag\\
&+\bigg[\frac{(n+1)\sigma_3 +\|\mathfrak{T}\|^2\sigma_9 +\|\mathfrak{S}\|^2\sigma_{10}+2\|\mathfrak{R}\|^2\sigma_{11}}{2\sigma_{12}}\bigg]t_k.
\end{align}
Now, by using \eqref{eq103} and after straightforward calculations, we get
\begin{align}\label{HK}
I_k=\frac{1}{2r}\big(Hs_k+Kt_k\big),
\end{align}
where
\begin{eqnarray*}
H\!\!&:=&\!\! (n+1)\Big(\frac{\phi_s}{\phi}+\frac{\sigma_{1s}}{\sigma_1}\Big)+\|\mathfrak{S}\|^2\Big(\phi_{sss}-\frac{3\sigma_{1s}\phi_{ss}}{\sigma_1}\Big)\phi
+2\|\mathfrak{R}\|^2\Big(\phi_{sst}-\frac{2\sigma_{1s}\phi_{st}+\sigma_{1t}\phi_{ss}}{\sigma_1}\Big)\phi\\
\!\!&&\!\! +\|\mathfrak{T}\|^2\Big(\phi_{tts}-\frac{2\sigma_{1t}\phi_{ts}+\sigma_{1s}\phi_{tt}}{\sigma_1}\Big)\phi ,\\
K\!\!&:=&\!\!(n+1)\Big(\frac{\phi_t}{\phi}+\frac{\sigma_{1t}}{\sigma_1}\Big)+\|\mathfrak{T}\|^2\Big(\phi_{ttt}-\frac{3\sigma_{1t}\phi_{tt}}{\sigma_1}\Big)\phi +\|\mathfrak{S}\|^2\Big(\phi_{sst}-\frac{2\sigma_{1s}\phi_{st}+\sigma_{1t}\phi_{ss}}{\sigma_1}\Big)\phi\\
\!\!&&\!\! +2\|\mathfrak{R}\|^2\Big(\phi_{tts}-\frac{2\sigma_{1t}\phi_{ts}+\sigma_{1s}\phi_{tt}}{\sigma_1}\Big)\phi.
\end{eqnarray*}
By \eqref{HK}, we get
\begin{align}\label{IbK}
I^k=\frac{1}{2rc_0}&\bigg\lbrace\Big[\big(1-\mathfrak{A}(u-s^2)-\mathfrak{B}(v-st)\big)H-\big(\mathfrak{A}(v-st)+\mathfrak{B}(a^2-t^2)\big)K\Big]x^k\notag\\
& -\Big[\big(s+\mathfrak{D}(u-s^2)+\mathfrak{E}(v-st)\big)H+\big(t+\mathfrak{D}(v-st)+\mathfrak{E}(a^2-t^2)\big)K\Big]r^k\notag\\
& -\Big[\big(\mathfrak{B}(u-s^2)+\mathfrak{C}(v-st)\big)H-\big(1-\mathfrak{B}(v-st)-\mathfrak{C}(a^2-t^2)\big)K\Big]a^k\bigg\rbrace ,
\end{align}
where $\mathfrak{A}, \mathfrak{B}$ and $\mathfrak{C}$ defined in Lemma \ref{eq106}. Moreover
\begin{align*}
\mathfrak{D}:=&\tau\omega +\nu b_1b_2+\sigma d_1d_2+\kappa e_1e_2+\alpha f_1f_2,\\
\mathfrak{E}:=&\nu b_2+\sigma d_2d_3+\kappa e_2e_3+\alpha f_2f_3.
\end{align*}
By definition, we have $\|\mathbf{I}\|^2 =I^kI_k$. Then, by multiplying \eqref{HK} with \eqref{IbK} we get \eqref{eq105}.
\end{proof}

\bigskip
For a spherically symmetric Finsler metric $F=u\phi(r,s)$, we conclude the following.

\begin{cor}
Let $F=u\phi(r,s)$ be a spherically symmetric Finsler metric on a domain $\mathcal{U}\subseteq\mathbb{R}^n$. Then the norm of mean Cartan torsion of $F$ is given by following
\begin{eqnarray}\label{120}
\Vert\mathbf{I}\Vert=\bigg\vert\frac{(n+1)(\sigma_1\phi_s-s\phi\phi_{ss})(\sigma_1 +(r^2-s^2)\phi_{ss})+(r^2-s^2)(\sigma_1\phi_{sss}+3s\phi_{ss}^2)\phi}
{2u\sigma_1\rho}\bigg\vert\sqrt{\frac{r^2-s^2}{\rho}},\ \
\end{eqnarray}
where $\rho:=\phi[\sigma_1+(r^2-s^2)\phi_{ss}]$.
\end{cor}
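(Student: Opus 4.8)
The plan is to derive \eqref{120} as the special case $a=0$ of Lemma \ref{I}. I would first record the notational dictionary: in $F=u\phi(r,s)$ the symbol $u$ denotes $|y|$ and $r$ denotes $|x|$, so passing to this subclass from $F=r\phi(u,s,v,t)$ amounts to setting $a=0$ (hence $v=t=0$), identifying the general $|y|$ with the corollary's $u$ and the general $|x|^2$ with $r^2$. The decisive simplification is that $a=0$ forces $t_i=a_i-tr_i=0$; therefore $\|\mathfrak{R}\|^2=g^{ij}s_it_j$ and $\|\mathfrak{T}\|^2=g^{ij}t_it_j$ vanish, every $t$-derivative of $\phi$ is zero, and \eqref{eq105} collapses to $\|\mathbf{I}\|^2=\frac{1}{4u^2}\|\mathfrak{S}\|^2H^2$, with $K$ playing no further role.

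Next I would reduce $H$. Since $\phi=\phi(r,s)$ one has $\sigma_1=\phi-s\phi_s$, $\sigma_{1s}=-s\phi_{ss}$ and $c_0=\phi\sigma_1$; feeding these into the two surviving terms of $H$ gives
\[
H=(n+1)\,\frac{\sigma_1\phi_s-s\phi\phi_{ss}}{\phi\sigma_1}+\|\mathfrak{S}\|^2\,\frac{\phi(\sigma_1\phi_{sss}+3s\phi_{ss}^2)}{\sigma_1}.
\]

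The main step is the closed evaluation of $\|\mathfrak{S}\|^2=g^{ij}s_is_j$. The cleanest route avoids the long coefficients of \eqref{eq91} and instead exploits the block structure of \eqref{eq99}: with $t_i=0$ all correction terms of $g_{ij}$ lie in $\mathrm{span}\{r_i,s_i\}$, so the Euclidean complement of this plane is a $g$-eigenspace with eigenvalue $\sigma_1\phi$ and is $g$-orthogonal to the plane. It thus suffices to solve $g_{ij}w^j=s_i$ for $w^i=\alpha r^i+\beta s^i$ using $g_{ij}r^ir^j=\phi^2$, $g_{ij}r^is^j=\phi\phi_s(r^2-s^2)$ and $g_{ij}s^is^j=\sigma_1\phi(r^2-s^2)+(\phi_s^2+\phi\phi_{ss})(r^2-s^2)^2$; the relevant determinant collapses to $\phi^3(r^2-s^2)\big(\sigma_1+(r^2-s^2)\phi_{ss}\big)$, whence $\|\mathfrak{S}\|^2=(r^2-s^2)/\rho$ with $\rho=\phi[\sigma_1+(r^2-s^2)\phi_{ss}]$. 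Equivalently one may specialize \eqref{eq96} and verify that $\mathfrak{A}$ reduces to $\phi_{ss}/[\sigma_1+(r^2-s^2)\phi_{ss}]$.

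Finally I would assemble the pieces via $\|\mathbf{I}\|=\frac{1}{2u}\sqrt{\|\mathfrak{S}\|^2}\,|H|$. Substituting $\|\mathfrak{S}\|^2=(r^2-s^2)/\rho$ into $H$, and using the identity $\phi[\sigma_1+(r^2-s^2)\phi_{ss}]=\rho$ so that the denominator $\phi\sigma_1[\sigma_1+(r^2-s^2)\phi_{ss}]$ of the combined $H$ becomes $\sigma_1\rho$, produces the numerator $(n+1)(\sigma_1\phi_s-s\phi\phi_{ss})(\sigma_1+(r^2-s^2)\phi_{ss})+(r^2-s^2)(\sigma_1\phi_{sss}+3s\phi_{ss}^2)\phi$ over $2u\sigma_1\rho$, times $\sqrt{(r^2-s^2)/\rho}$, which is exactly \eqref{120}. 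The only genuine obstacle is obtaining $g^{ij}s_is_j$ in the factored form involving $\rho$; the eigenspace/two-by-two argument is what makes $\rho$ appear cleanly without wading through $\zeta,\tau,\omega,\dots$.
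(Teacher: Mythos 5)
Your proposal is correct and reaches \eqref{120} by the same overall route as the paper: set $a=0$ so that $t_i=0$, $\|\mathfrak{R}\|^2=\|\mathfrak{T}\|^2=0$, collapse \eqref{eq105} to $\|\mathbf{I}\|^2=\tfrac{1}{4u^2}\|\mathfrak{S}\|^2H^2$, evaluate $\|\mathfrak{S}\|^2$ and $H$, and assemble. The one place where you genuinely diverge is the evaluation of $\|\mathfrak{S}\|^2=g^{ij}s_is_j$. The paper goes through the inverse-metric expansion \eqref{eq91}: it specializes \eqref{eq96}, computes $\zeta$, $\omega$, $\tau$ explicitly (with $\nu=\sigma=\kappa=\alpha=0$), and sums them to get $\mathfrak{A}=\phi_{ss}/[\sigma_1+(r^2-s^2)\phi_{ss}]$, whence $\|\mathfrak{S}\|^2=(r^2-s^2)/\rho$. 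You instead exploit the block structure of \eqref{eq99}: when $t_i=0$ the correction to $\sigma_1\phi\,\delta_{ij}$ lives entirely on $\mathrm{span}\{r_i,s_i\}$, so the problem reduces to inverting a $2\times2$ Gram matrix whose determinant factors as $\phi^3(r^2-s^2)[\sigma_1+(r^2-s^2)\phi_{ss}]$, giving the same $(r^2-s^2)/\rho$. Your computation checks out (I verified $g(r,r)=\phi^2$, $g(r,s)=\phi\phi_s(r^2-s^2)$, $g(s,s)=\sigma_1\phi(r^2-s^2)+(\phi_s^2+\phi\phi_{ss})(r^2-s^2)^2$, and the resulting $\beta=1/\rho$), and it has the advantage of being self-contained: it does not lean on the long coefficient formulas relegated to the appendix of an external reference, and it makes transparent why $\rho$ appears in factored form. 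The paper's route, by contrast, reuses machinery (\eqref{eq91}, \eqref{eq96}) already set up for the general case, which is why it is the natural choice there. Your reduction of $H$ via $\sigma_{1s}=-s\phi_{ss}$ agrees with \eqref{eq112}, and your explicit flagging of the $r\leftrightarrow u$ notational swap between $F=r\phi(u,s,v,t)$ and $F=u\phi(r,s)$ is a point the paper leaves implicit and is worth keeping.
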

\begin{proof}
For a spherically symmetric Finsler metric $F=u\phi(r,s)$, we have $a=0$ which yields $v=t=0$. By putting these in \eqref{eq105}, we get
\begin{align}\label{eq114}
\Vert\mathbf{I}\Vert^2=\frac{1}{4u^2}\|\mathfrak{S}\|^2 H^2,
\end{align}
where
\begin{align*}
H:=(n+1)\big(\frac{\phi_s}{\phi}+\frac{\sigma_{1s}}{\sigma_1}\big)+\|\mathfrak{S}\|^2\big(\phi_{sss}
-\frac{3\sigma_{1s}\phi_{ss}}{\sigma_1}\big)\phi.
\end{align*}
 Moreover, $\|\mathfrak{S}\|^2=(r^2-s^2)-\mathfrak{A}(r^2-s^2)$
where
\begin{align*}
\mathfrak{A}:=\zeta+\tau\omega^2+\nu b_1^2+\sigma d_1^2+\kappa e_1^2+\alpha f_1^2.
\end{align*}
On the other hand by using \eqref{eq8}, \eqref{eq91} and simple calculations, we get
\begin{align}\label{eq107}
\zeta &=\frac{\phi_s}{s\phi +(r^2-s^2)\phi_s},\\
\label{eq108}\omega &=-\frac{\phi}{s\phi +(r^2-s^2)\phi_s},\\
\label{eq109}\tau &=\frac{\big[s(\phi_s ^2+\phi\phi_{ss})-\phi\phi_s\big]\big[s\phi +(r^2-s^2)\phi_s\big]}{\phi^2\big[\phi -s\phi_s +(r^2-s^2)\phi_{ss}\big]},\\
\label{eq110}\nu &=\sigma =\kappa =\alpha =0.
\end{align}
Therefore by putting \eqref{eq107}-\eqref{eq110} in $\mathfrak{A}$, we have
\begin{align}\label{eq111}
\mathfrak{A}=\frac{\phi_{ss}}{\phi -s\phi_s +(r^2-s^2)\phi_{ss}}.
\end{align}
Now, using straightforward simplifications, we have
\begin{align}\label{eq112}
H=\frac{(n+1)\big[(\phi -s\phi_s)\phi_s -s\phi\phi_{ss}\big]\big[\phi -s\phi_s +(r^2-s^2)\phi_{ss}\big]+(r^2-s^2)\big[(\phi -s\phi_s)\phi_{sss}+3s\phi_{ss}^2\big]\phi}{\phi(\phi -s\phi_s)\big[\phi -s\phi_s +(r^2-s^2)\phi_{ss}\big]}.
\end{align}
Substituting \eqref{eq111} and \eqref{eq112} into \eqref{eq114} give us the proof.
\end{proof}
\begin{ex}
Let
\begin{align*}
F=\frac{\big(\sqrt{|y|^2-(|x|^2|y|^2-\langle x,y\rangle ^2)}+\langle x,y\rangle\big)^2}{(1-|x|^2)^2\sqrt{|y|^2-(|x|^2|y|^2-\langle x,y\rangle ^2)}},
\end{align*}
be the Berwald metric, where $y\in T_x\mathbb{B}^2\cong\mathbb{R}^2$.

It is clear that the corresponding function $\phi(r,s)$ of a spherically symmetric Finsler metric $F=|y|\phi$ is given by
\begin{align}\label{PHI}
\phi =\frac{(\sqrt{1-r^2+s^2}+s)^2}{(1-r^2)^2\sqrt{1-r^2+s^2}},
\end{align}
where $r=|x|$, $s=\langle x,y\rangle/|y|$ and $|s|<r<1$. It follows from \eqref{120} and \eqref{PHI} that
\begin{align}\label{04}
\Vert\mathbf{I}\Vert =\frac{3}{F}\bigg\vert 1-\frac{s(s+\sqrt{1-r^2+s^2})}{(1-r^2)\big(1+2(r^2-s^2)\big)}\bigg\vert\sqrt{\frac{r^2-s^2}{1+2(r^2-s^2)}}.
\end{align}
By straightforward calculations, we have
\begin{align}\label{01}
-\frac{s(s+\sqrt{1-r^2+s^2})}{1-r^2}\leq\frac{1+s^2}{2(1+\sqrt{1-r^2})}.
\end{align}
Moreover, we get
\begin{align}\label{02}
0<\frac{1}{1+2(r^2-s^2)}<1,\qquad \frac{1+s^2}{1+\sqrt{1-r^2}}>0.
\end{align}
Since $|s|<r$ and by \eqref{01} and \eqref{02}, we obtain
\begin{align}\label{03}
-\frac{s(s+\sqrt{1-r^2+s^2})}{(1-r^2)\big(1+2(r^2-s^2)\big)}<\frac{1+r^2}{2(1+\sqrt{1-r^2})}.
\end{align}
Assume that $F(y)=1$. Then by \eqref{04}, \eqref{03} and 
\begin{align*}
\frac{r^2-s^2}{1+2(r^2-s^2)}<1,
\end{align*}
one can find that the mean Cartan torsion satisfies
\begin{align*}
\Vert\mathbf{I}\Vert <3(1+\frac{1+|x|^2}{2(1+\sqrt{1-|x|^2})}).
\end{align*}
\end{ex}

\bigskip
For a Randers metric $F=\alpha +\beta$, the following is proved.
\begin{prop}\label{Ra} {\rm (\cite{ZSh})} \emph{For any Randers metric $F=\alpha +\beta$  on a $n$-dimensional manifold $M$, where $\alpha$ is a Riemannian metric and $\beta$ is an 1-form on $M$ with $\Vert\beta\Vert_\alpha<1$. The mean Cartan torsion is uniformly bounded. More precisely}
\begin{align*}
\Vert\mathbf{I}\Vert\leq\frac{n+1}{\sqrt{2}}\sqrt{1-\sqrt{1-\Vert\beta\Vert^2}}.
\end{align*}
\end{prop}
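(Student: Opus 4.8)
The plan is to reduce the inequality to an explicit one–variable optimization over the direction $y$. Write $\alpha=\sqrt{a_{ij}y^iy^j}$, $\beta=b_iy^i$, $y_i:=a_{ij}y^j$ and $\|\beta\|^2=b^2:=a^{ij}b_ib_j$. The first step is to record the closed form of the mean Cartan torsion. Using the identity $I_i=\tfrac{\partial}{\partial y^i}\ln\sqrt{\det(g_{jk})}$ (which follows from $I_k=g^{ij}C_{ijk}=\tfrac12 g^{ij}\partial_{y^k}g_{ij}$) together with the Randers determinant formula $\det(g_{jk})=(F/\alpha)^{n+1}\det(a_{jk})$, differentiation gives
\[
I_i=\frac{n+1}{2}\,\frac{\partial}{\partial y^i}\ln\frac{F}{\alpha}=\frac{n+1}{2F}\Big(b_i-\frac{\beta}{\alpha^2}\,y_i\Big).
\]
I then set $w_i:=b_i-\tfrac{\beta}{\alpha^2}y_i$ and observe $w_iy^i=0$, so that $\|\mathbf I\|^2=I^mI_m=\tfrac{(n+1)^2}{4F^2}\,g^{ij}w_iw_j$.

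Second, I would compute $g^{ij}w_iw_j$ by diagonalizing. From $F^2$ one gets $g_{ij}=\tfrac{F}{\alpha}\bigl(a_{ij}-\tfrac{y_iy_j}{\alpha^2}\bigr)+\ell_i\ell_j$ with $\ell_i=F_{y^i}=\tfrac{y_i}{\alpha}+b_i$. Choosing $a$-orthonormal coordinates at $x$ whose first axis points along $y$, one has $y_i/\alpha=\delta_{i1}$ and, writing $\beta_0:=\beta/\alpha=b_1$, the fundamental tensor becomes block form plus a rank-one term:
\[
g_{11}=(1+\beta_0)^2,\qquad g_{1k}=(1+\beta_0)b_k,\qquad g_{kl}=(1+\beta_0)\delta_{kl}+b_kb_l\quad(k,l\ge2).
\]
In these coordinates $w=(0,b_2,\dots,b_n)$, so only the lower-right block of $g^{ij}$ enters the contraction. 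A Schur-complement computation shows that this block equals $\tfrac{\alpha}{F}\,\delta^{kl}=\tfrac{1}{1+\beta_0}\delta^{kl}$, because the correction $C A^{-1}B=b_kb_l$ cancels the rank-one term in $g_{kl}$ exactly. Hence $g^{ij}w_iw_j=\tfrac{b^2-\beta_0^2}{1+\beta_0}$. Since $\|\mathbf I\|$ is positively homogeneous of degree $-1$ in $y$, I evaluate it on the indicatrix $F=1$ (as in the Example above), obtaining
\[
\|\mathbf I\|^2=\frac{(n+1)^2}{4}\,\frac{b^2-\beta_0^2}{1+\beta_0},\qquad \beta_0\in[-b,b].
\]

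Finally, I would maximize $f(\beta_0)=\tfrac{b^2-\beta_0^2}{1+\beta_0}$ over $\beta_0\in[-b,b]$. Solving $f'=0$ yields $\beta_0^2+2\beta_0+b^2=0$, whose admissible root is $\beta_0=-1+\sqrt{1-b^2}$; one checks it lies in $[-b,b]$ using $b<1$. There $1+\beta_0=\sqrt{1-b^2}$ and $b^2-\beta_0^2=2\sqrt{1-b^2}\,\bigl(1-\sqrt{1-b^2}\bigr)$, so $f_{\max}=2\bigl(1-\sqrt{1-b^2}\bigr)$ and therefore
\[
\|\mathbf I\|\le\frac{n+1}{\sqrt2}\sqrt{1-\sqrt{1-\|\beta\|^2}},
\]
as claimed, the bound being attained at $\beta_0=-1+\sqrt{1-b^2}$. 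The main obstacle is the inverse-metric step: one must verify that the rank-one piece $b_kb_l$ is annihilated in the Schur complement, so that the relevant block of $g^{ij}$ is a clean multiple of the identity. This is exactly what collapses $g^{ij}w_iw_j$ to the single quotient above and ultimately makes the estimate sharp.
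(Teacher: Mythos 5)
Your proposal is correct, and every step checks out: the logarithmic-derivative formula for $I_i$ combined with $\det(g_{jk})=(F/\alpha)^{n+1}\det(a_{jk})$ gives $I_i=\tfrac{n+1}{2F}\bigl(b_i-\tfrac{\beta}{\alpha^2}y_i\bigr)$, the Schur complement of the $(1,1)$-entry does cancel the rank-one term so that $g^{ij}w_iw_j=\tfrac{b^2-\beta_0^2}{1+\beta_0}$, and the one-variable maximization at $\beta_0=-1+\sqrt{1-b^2}$ yields exactly $2\bigl(1-\sqrt{1-\|\beta\|^2}\bigr)$. The paper itself gives no proof of this proposition (it is quoted from Shen's book), and your argument is essentially the standard one found there, so there is nothing to reconcile.
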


\bigskip

\begin{ex}
Let
\begin{align*}
F=\frac{\sqrt{|y|^2-(|x|^2|y|^2-\langle x,y\rangle ^2)}+\langle x,y\rangle}{1-|x|^2},\qquad y\in T_x\mathbb{B}^2\cong\mathbb{R}^2.
\end{align*}
be the Funk metric on $\mathbb{B}^2$. Obviously $F$ is a Randers metric with $|x|<1$. Then by using Proposition \ref{Ra}, we have
\begin{align*}
\Vert\mathbf{I}\Vert\leq\frac{3}{\sqrt{2}}\sqrt{1-\sqrt{1-|x|^2}}.
\end{align*}
\end{ex}

\bigskip

Now, we are ready to prove Theorem \ref{THM1}.

\bigskip

\noindent
{\bf Proof of Theorem \ref{THM1}:}
Contracting \eqref{eq102} by $\delta^{ij}$, we have
\begin{equation}
\begin{split}\label{C0}
C_{iik}=&\ \frac{\sigma_2}{2\sigma_{12}}\big\lbrace h_{ii}s_k+2h_{ik}s_i\big\rbrace +\frac{\sigma_3}{2\sigma_{12}}\big\lbrace h_{ii}t_k+2h_{ik}t_i\big\rbrace+\frac{\sigma_8}{2\sigma_{12}}s_is_is_k+\frac{\sigma_9}{2\sigma_{12}}t_it_it_k\\
&\ +\frac{\sigma_{10}}{2\sigma_{12}}\big(s_is_it_k+2t_is_is_k\big)+\frac{\sigma_{11}}{2\sigma_{12}}\big(t_it_is_k+2s_it_it_k\big).
\end{split}
\end{equation}
Then
\begin{equation}
\begin{split}\label{C1}
C_{iik}=&\ \frac{\sigma_2}{2\sigma_{12}}\big\lbrace h_{ik}s_i+2h_{ki}s_i\big\rbrace +\frac{\sigma_3}{2\sigma_{12}}\big\lbrace h_{ik}t_i+2h_{ki}t_i\big\rbrace+\frac{\sigma_8}{2\sigma_{12}}s_is_is_k+\frac{\sigma_9}{2\sigma_{12}}t_it_it_k\\
&\ +\frac{\sigma_{10}}{2\sigma_{12}}\big(s_it_is_k+2s_is_it_k\big)+\frac{\sigma_{11}}{2\sigma_{12}}\big(t_is_it_k+2t_it_is_k\big).
\end{split}
\end{equation}
 By using \eqref{C0} and \eqref{C1}, we get
 \begin{align}\label{115}
 \sigma_2 h_{ii}s_k+\sigma_3 h_{ii}t_k+\sigma_{10}s_it_is_k+\sigma_{11}t_is_it_k=\sigma_2 h_{ik}s_i+\sigma_3 h_{ik}t_i+\sigma_{10}s_is_it_k+\sigma_{11}t_it_is_k.
 \end{align}
 \eqref{115} is equal to following
 \begin{align}\label{MN}
 Ms_k=Nt_k,
 \end{align}
 where
 \begin{align*}
 M:=&\ \sigma_2\big[\sigma_1(n-2)\phi +(a^2-t^2)\phi\phi_{tt}+(v-st)\phi\phi_{st}\big]-\sigma_3\big[(v-st)\phi\phi_{ss}+(a^2-t^2)\phi\phi_{st}\big]\\
 &\ +\sigma_{10}(v-st)-\sigma_{11}(a^2-t^2),\\
 N:=&\ \sigma_2\big[(v-st)\phi\phi_{tt}+(u-s^2)\phi\phi_{st}\big]-\sigma_3\big[\sigma_1(n-2)\phi +(u-s^2)\phi\phi_{ss}+(v-st)\phi\phi_{st}\big]\\
 &\ +\sigma_{10}(u-s^2)-\sigma_{11}(v-st).
 \end{align*}
 By \eqref{HK} and \eqref{MN}, we have
 \begin{align}\label{116}
 s_k=2r\frac{N}{HN+KM}I_k,\qquad t_k=2r\frac{M}{HN+KM}I_k.
 \end{align}
 Putting \eqref{116} in \eqref{eq102} give us
 \begin{equation}
 \begin{split}\label{117}
 C_{ijk}=&\frac{r\big(\sigma_2 N+\sigma_3 M\big)}{\sigma_{12}\big(HN+KM\big)}\bigg\lbrace h_{ij}I_k+h_{jk}I_i+h_{ki}I_j\bigg\rbrace\\
&+\frac{r^3\big(4\sigma_8 N^3+4\sigma_9 M^3+12\sigma_{10}N^2M+12\sigma_{11}NM^2\big)}{\sigma_{12}\big(HN+KM\big)^3}I_iI_jI_k.
 \end{split}
 \end{equation}
 Thus,  by\eqref{117} we get
 \begin{align}\label{122x}
C_{ijk}=\frac{\mathcal{P}}{1+n}\Big\lbrace h_{ij}I_k+h_{jk}I_i+h_{ki}I_j\Big\rbrace +\frac{\mathcal{Q}}{\Vert\mathbf{I}\Vert^2}I_iI_jI_k,
\end{align}
where
\begin{align*}
\mathcal{P}:=\frac{r(n+1)\big[\sigma_2 N+\sigma_3 M\big]}{\sigma_{12}\big[HN+KM\big]},\qquad
\mathcal{Q}:=\frac{r^3\big[4\sigma_8 N^3+4\sigma_9 M^3+12\sigma_{10}N^2M+12\sigma_{11}NM^2\big]\Vert\mathbf{I}\Vert^2}{\sigma_{12}\big[HN+KM\big]^3}.
\end{align*}
By \eqref{122x}, it follows that every general spherically symmetric Finsler metric $F=r\phi(u,s,v,t)$ is semi-C-reducible.
\qed

\bigskip

For  spherically symmetric Finsler metric, we conclude the following.
\begin{cor}
Let $F=u\phi(r,s)$ be a spherically symmetric Finsler metric on a domain  $\mathcal{U}\subseteq\mathbb{R}^n$. Then the Cartan torsion of $F$ can be written as follows
\begin{align}\label{122}
C_{ijk}=\frac{\mathcal{P}}{1+n}\Big\lbrace h_{ij}I_k+h_{jk}I_i+h_{ki}I_j\Big\rbrace +\frac{\mathcal{Q}}{\Vert\mathbf{I}\Vert^2}I_iI_jI_k,
\end{align}
where
\begin{align*}
\mathcal{P}&:=\frac{(n+1)\big[(\phi -s\phi_s)\phi_s -s\phi\phi_{ss}\big]\big[\phi -s\phi_s +(r^2-s^2)\phi_{ss}\big]}{(n+1)\big[(\phi -s\phi_s)\phi_s -s\phi\phi_{ss}\big]\big[\phi -s\phi_s +(r^2-s^2)\phi_{ss}\big]+(r^2-s^2)\big[(\phi -s\phi_s)\phi_{sss}+3s\phi_{ss}^2\big]\phi},\\
\mathcal{Q}&:=\frac{(r^2-s^2)\big[(\phi-s\phi_s)\phi_{sss}+3s\phi_{ss}^2\big]\phi^2}{(n+1)\big[(\phi -s\phi_s)\phi_s -s\phi\phi_{ss}\big]\big[\phi -s\phi_s +(r^2-s^2)\phi_{ss}\big]\phi +(r^2-s^2)\big[(\phi -s\phi_s)\phi_{sss}+3s\phi_{ss}^2\big]\phi^2}.
\end{align*}
\end{cor}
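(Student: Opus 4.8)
The plan is to specialize the proof of Theorem \ref{THM1} to the case $a=0$, in which $v=t=0$ and every $t$-derivative of $\phi$ vanishes identically, and then to feed in the explicit data already computed in the purely spherically symmetric setting. The first step is to record the structural collapse forced by $a=0$. Since $t_i=a_i-tr_i=0$ and $\phi$ does not depend on $t$, one checks that $\sigma_3=\sigma_5=\sigma_6=\sigma_7=0$, hence $\sigma_9=\sigma_{10}=\sigma_{11}=0$, so every term of \eqref{eq102} carrying a factor $t_i$ or a $t$-derivative of $\phi$ drops out and
\[
C_{ijk}=\frac{\sigma_2}{2\sigma_{12}}\big\{h_{ij}s_k+h_{jk}s_i+h_{ki}s_j\big\}+\frac{\sigma_8}{2\sigma_{12}}\,s_is_js_k,
\]
where $\sigma_2=(\phi-s\phi_s)\phi_s-s\phi\phi_{ss}$, $\sigma_8=\phi(\sigma_1\sigma_4-3\sigma_2\phi_{ss})$ and $\sigma_{12}=u\phi(\phi-s\phi_s)$; here I write $u=|y|$ and $r=|x|$ to match the convention $F=u\phi(r,s)$.

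Next I would read off the mean Cartan torsion in this limit. The coefficient $K$ of \eqref{HK} vanishes, since each of its summands is proportional either to a $t$-derivative of $\phi$ or to $\sigma_{1t}=-s\phi_{st}-t\phi_{tt}$, all of which are zero here; likewise $\|\mathfrak{R}\|^2$ and $\|\mathfrak{T}\|^2$ vanish because $t_i=0$. Thus \eqref{HK} reduces to $I_k=\tfrac{H}{2u}\,s_k$, whence $s_k=\tfrac{2u}{H}I_k$. Substituting this into the reduced Cartan torsion above puts $C_{ijk}$ directly into the semi-$C$-reducible form \eqref{122}, and comparing coefficients gives
\[
\frac{\mathcal{P}}{n+1}=\frac{u\,\sigma_2}{\sigma_{12}H},\qquad \frac{\mathcal{Q}}{\|\mathbf{I}\|^2}=\frac{4u^3\sigma_8}{\sigma_{12}H^3}.
\]

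The remaining work is the insertion of explicit expressions. With $c_0=\phi(\phi-s\phi_s)$, the norm \eqref{eq96} becomes $\|\mathfrak{S}\|^2=c_0^{-1}\big[(r^2-s^2)-\mathfrak{A}(r^2-s^2)^2\big]$, which, using $\mathfrak{A}$ from \eqref{eq111}, simplifies to $\|\mathfrak{S}\|^2=(r^2-s^2)\big/\big(\phi[\phi-s\phi_s+(r^2-s^2)\phi_{ss}]\big)$; similarly $\sigma_8$ collapses to $\phi^2\big[(\phi-s\phi_s)\phi_{sss}+3s\phi_{ss}^2\big]$. Inserting these together with $H$ from \eqref{eq112} and the identity $\|\mathbf{I}\|^2=\tfrac{1}{4u^2}\|\mathfrak{S}\|^2H^2$ from \eqref{eq114} into the two coefficients above, and cancelling the factors of $\phi$ and $(\phi-s\phi_s)$ carried by $\sigma_{12}$ and by the denominator of $H$, produces exactly the claimed formulas for $\mathcal{P}$ and $\mathcal{Q}$.

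I expect the main obstacle to be the algebraic bookkeeping in this last step, in particular keeping track of the factor $c_0^{-1}$ hidden inside $\|\mathfrak{S}\|^2=g^{ij}s_is_j$ and of the notation swap in which $|y|$ plays the role of the general ``$r$'' while $|x|^2$ plays that of the general ``$u$''. A convenient consistency check is the identity $\mathcal{P}+\mathcal{Q}=1$, which is forced by contracting the semi-$C$-reducible expression with $g^{jk}$ to recover $I_i$ and which the two displayed formulas do satisfy.
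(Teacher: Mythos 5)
Your proposal is correct and follows essentially the same route as the paper: specialize the semi-$C$-reducibility computation to $a=0$ (so $t_i=0$ and all $t$-derivatives of $\phi$ drop out), read off the intermediate coefficients $\mathcal{P}=\tfrac{(n+1)u\sigma_2}{\sigma_{12}H}$ and $\mathcal{Q}=\tfrac{4u^3\sigma_8\Vert\mathbf{I}\Vert^2}{\sigma_{12}H^3}$, and then substitute the explicit spherically symmetric expressions for $H$, $\Vert\mathfrak{S}\Vert^2$ and $\Vert\mathbf{I}\Vert^2$ from \eqref{eq112}, \eqref{eq111} and \eqref{eq114}. In fact you supply more detail than the paper's own two-line proof (including the useful simplification $\sigma_8=\phi^2[(\phi-s\phi_s)\phi_{sss}+3s\phi_{ss}^2]$ and the check $\mathcal{P}+\mathcal{Q}=1$), and your intermediate formulas agree exactly with the paper's.
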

\begin{proof}
By using \eqref{122} together with the fact that $a=0$ in \eqref{eq3}, we have
\begin{align}\label{121}
C_{ijk}=\frac{\mathcal{P}}{1+n}\Big\lbrace h_{ij}I_k+h_{jk}I_i+h_{ki}I_j\Big\rbrace +\frac{\mathcal{Q}}{\Vert\mathbf{I}\Vert^2}I_iI_jI_k,
\end{align}
where
\begin{align*}
\mathcal{P}:=\frac{(n+1)\big[(\phi -s\phi_s)\phi_s -s\phi\phi_{ss}\big]}{\phi(\phi -s\phi_s)H},\qquad
\mathcal{Q}:=\frac{4u^2\big[(\phi -s\phi_s)\phi_{sss}+3s\phi_{ss}^2\big]\phi^2\|\mathbf{I}\|^2}{\phi(\phi -s\phi_s)H^3}.
\end{align*}
By substituting \eqref{120} and \eqref{eq112} into \eqref{121}, we get \eqref{122}.
\end{proof}
\section{Proof Theorem \ref{THM2}}

The geodesic spray coefficients of a Finsler metric $F=F(x, y)$ on a manifold $M$ are given by
\begin{align}
G^i:=\frac{1}{4}g^{il}\Big\lbrace\big(F^2\big)_{x^ky^l}y^k-\big(F^2\big)_{x^l}\Big\rbrace =\frac{F_{x^k}y^k}{2F}y^i+\frac{F}{2}g^{il}\Big(F_{x^ky^l}y^k-F_{x^l}\Big).\label{G}
\end{align}
For  a general spherically symmetric Finsler $F=r\phi(u,s,v,t)$ o a domain $\mathcal{U}\subseteq\mathbb{R}^n$, we have
\[
F_{x^k}=2r\phi_{u}x_{k}+\phi_{s}y_{k}+r\phi_{v}a_{k},
\]
one can write the first part as
\begin{align}\label{eq4}
\frac{F_{x^k}y^k}{2F}y^i=\frac{r}{2\phi}\Big(2s\phi_{u}+\phi_{s}+t\phi_{v}\Big)y_{i}.
\end{align}
At the same time, it can be computed that
\begin{eqnarray}
\nonumber F_{x^ky^l}\!\!&=&\!\! \Big(2r\phi_{u}x_{k}+\phi_{s}y_{k}+r\phi_{v}a_{k}\Big)_{y^l}\\
\nonumber \!\!&=&\!\! \frac{2}{r}\Big(\phi_{u}-s\phi_{us}-t\phi_{ut}\Big)x_{k}y_{l}+\frac{1}{r}\Big(\phi_{v}-s\phi_{vs}-t\phi_{vt}\Big)a_{k}y_{l}\\
\nonumber \!\!&&\!\! -\frac{1}{r^2}\Big(s\phi_{ss}+t\phi_{st}\Big)y_{k}y_{l}+\frac{1}{r}\Big(\phi_{st}a_{l}y_{k}+\phi_{ss}x_{l}y_{k}\Big)\\
\!\!&&\!\! +2\Big(\phi_{us}x_{l}x_{k}+\phi_{ut}a_{l}x_{k}\Big)+\phi_{vt}a_{k}a_{l}+\phi_{s}\delta_{lk}+\phi_{vs}a_{k}x_{l}.
\end{eqnarray}
Hence
\begin{equation}\label{eq5}
\begin{split}
F_{x^ky^l}y^k-F_{x^l}=&\ r\Big(2s\phi_{us}+\phi_{ss}+t\phi_{sv}-2\phi_u \Big)\Big(x_l-s\frac{y_{l}}{r}\Big)\\
&\ +r\Big(2s\phi_{ut}+\phi_{st}+t\phi_{vt}-\phi_ v\Big)\Big(a_l-t\frac{y_l}{r}\Big).
\end{split}
\end{equation}
Putting  \eqref{eq4} and \eqref{eq5} in \eqref{G} implies that
\begin{align}
\nonumber G^i=\frac{r}{2\phi}\big(2s\phi_{u}+\phi_{s}+t\phi_{v}\big)y_{i}+\frac{r^2\phi}{2}g^{il}&\Big\lbrace\big (2s\phi_{us}+\phi_{ss}+t\phi_{sv}-2\phi_{u}\big)\big(x_{l}-s\frac{y_{l}}{r}\big)\\
&+\big(2s\phi_{ut}+\phi_{st}+t\phi_{vt}-\phi_{v}\big)\big(a_{l}-t\frac{y_{l}}{r}\big)\Big\rbrace.\label{G0}
\end{align}
The following hold
\begin{align}
g^{il}\Big(x_{l}-s\frac{y_{l}}{r}\Big)=c_{0}^{-1}\Big[Cx^i+D\frac{y^i}{r}+Ea^i\Big],\label{eq6}\\
g^{il}\Big(a_{l}-t\frac{y_{l}}{r}\Big)=c_{0}^{-1}\Big[Gx^i+H\frac{y^i}{r}+Ia^i\Big],\label{eq7}
\end{align}
where $C,D,E,G,H$ and $I$ are too long, they are listed in Appendix of  \cite{RG}. By putting \eqref{eq6} and \eqref{eq7} into \eqref{G} and simplifying the result it, we get
\begin{align}\label{eq12}
G^i=rPy^i+r^2Qx^i+r^2Ra^i,
\end{align}
where $P$, $Q$ and $R$ being long, they are again listed in Appendix section of \cite{RG}.

By putting \eqref{eq12} into \eqref{L}, we get
\begin{equation}\label{eq48}
\begin{split}
L_{jkl}=&-\dfrac{\phi}{2}\Big[ L_{1} x^jx^kx^l+L_{2}(x^j\delta_{kl}+x^k\delta_{jl}+x^l\delta_{jk})+L_{3}a^ja^ka^l\\
&+L_{4}(a^j\delta_{kl}+a^k\delta_{jl}+a^l\delta_{jk})+L_{5}(x^jx^ka^l+x^kx^la^j+x^lx^ja^k)\\
&+L_{6}(x^ja^ka^l+x^ka^la^j+x^la^ja^k)+L_{7}\dfrac{y^j}{r}\dfrac{y^k}{r}\dfrac{y^l}{r}+L_{8}(\dfrac{y^j}{r}\delta_{kl}+\dfrac{y^k}{r}\delta_{jl}+\dfrac{y^l}{r}\delta_{jk})\\
&+L_{9}(\dfrac{y^j}{r}x^kx^l+\dfrac{y^k}{r}x^lx^j+\dfrac{y^l}{r}x^jx^k)+L_{10}(x^j\dfrac{y^k}{r}\dfrac{y^l}{r}+x^k\dfrac{y^j}{r}\dfrac{y^l}{r}+x^l\dfrac{y^j}{r}\dfrac{y^k}{r})\\
&+L_{11}(\dfrac{y^j}{r}a^ka^l+\dfrac{y^k}{r}a^ja^l+\dfrac{y^l}{r}a^ja^k)+L_{12}(a^j\dfrac{y^k}{r}\dfrac{y^l}{r}+a^k\dfrac{y^j}{r}\dfrac{y^l}{r}+a^l\dfrac{y^j}{r}\dfrac{y^k}{r})\\
&+L_{13}\big(x^j(a^k\dfrac{y^l}{r}+a^l\dfrac{y^k}{r})+x^k(a^l\dfrac{y^j}{r}+a^j\dfrac{y^l}{r})+x^l(a^j\dfrac{y^k}{r}+a^k\dfrac{y^j}{r})\big)\Big],
\end{split}
\end{equation}
where
\begin{eqnarray}
\nonumber L_1\!\!&=&\!\! 3\phi_{s}P_{ss}+\phi P_{sss}+\big(s\phi +(u-s^2)\phi_{s}+(v-st)\phi_{t}\big)Q_{sss}+\big(t\phi +(v-st)\phi_{s}+(a^2-t^2)\phi_{t}\big)R_{sss},\\
\nonumber L_2\!\!&=&\!\! -\phi(sP_{ss}+tP_{st})+\phi_{s}(P-sP_{s}-tP_{t})+\big(s\phi +(u-s^2)\phi_{s}+(v-st)\phi_{t}\big)(Q_{s}-sQ_{ss}-tQ_{st})\\
\nonumber\!\!&&\!\!  +\big(t\phi +(v-st)\phi_{s}+(a^2-t^2)\phi_{t}\big)(R_{s}-sR_{ss}-tR_{st}),\\
\nonumber L_3\!\!&=&\!\! \phi P_{ttt}+3\phi_{t}P_{tt}+\big(s\phi +(u-s^2)\phi_{s}+(v-st)\phi_{t}\big)Q_{ttt}+\big(t\phi +(v-st)\phi_{s}+(a^2-t^2)\phi_{t}\big)R_{ttt},\\
\nonumber L_4\!\!&=&\!\! -\phi(tP_{tt}+sP_{ts})+\phi_{t}(P-sP_{s}-tP_{t})+\big(s\phi +(u-s^2)\phi_{s}+(v-st)\phi_{t}\big)(Q_{t}-tQ_{tt}-tQ_{ts})\\
\nonumber\!\!&&\!\!  +\big(t\phi +(v-st)\phi_{s}+(a^2-t^2)\phi_{t}\big)(R_{t}-tR_{tt}-sR_{ts}),\\
\nonumber L_5\!\!&=&\!\! \phi_{t}P_{ss}+2\phi_{s}P_{st}+\phi P_{sst}+\big(s\phi +(u-s^2)\phi_{s}+(v-st)\phi_{t}\big)Q_{sst}\\
\nonumber \!\!&&\!\!  +\big(t\phi +(v-st)\phi_{s}+(a^2-t^2)\phi_{t}\big)R_{sst},\\
\nonumber L_6\!\!&=&\!\! \phi_{s}P_{tt}+2\phi_{t}P_{ts}+\phi P_{stt}+\big(s\phi +(u-s^2)\phi_{s}+(v-st)\phi_{t}\big)Q_{stt}\\
\nonumber \!\!&&\!\!  +\big(t\phi +(v-st)\phi_{s}+(a^2-t^2)\phi_{t}\big)R_{sst},\\
\nonumber L_7\!\!&=&\!\! -s^3L_{1}+3sL_{2}-t^3L_{3}+3tL_{4}-3s^2tL_{5}-3st^2L_{6},\\
\nonumber L_8\!\!&=&\!\! -sL_{2}-tL_{4},\\
\nonumber L_9\!\!&=&\!\! -sL_{1}-tL_{5},\\
\nonumber L_{10}\!\!&=&\!\! s^2L_{1}-L_{2}+2stL_{5}+t^2L_{6},\\
\nonumber L_{11}\!\!&=&\!\! -tL_{3}-sL_{6},\\
\nonumber L_{12}\!\!&=&\!\! t^2L_{3}-L_{4}+s^2L_{5}+2stL_{6},\\
\nonumber L_{13}\!\!&=&\!\! -sL_{5}-tL_{6}.
\end{eqnarray}
By substitiution \eqref{eq91} and \eqref{eq48} into \eqref{Jg}, $J_i$ is given by
\begin{align}\label{J}
J_i:=H(x^i-s\frac{y^i}{r})+K(a^i-t\frac{y^i}{r}),
\end{align}
where
\begin{align*}
H:=\frac{\phi}{2c_0}\big\lbrace\varphi_0 L_1+\varphi_1 L_2+\varphi_2 L_4+2\varphi_3 L_5+\varphi_4 L_6\big\rbrace,\\
K:=\frac{\phi}{2c_0}\big\lbrace 2\varphi_5 L_2+\varphi_4 L_3+ \varphi_6 L_4+\varphi_0 L_5+2\varphi_3 L_6\big\rbrace ,
\end{align*}
and
\begin{align*}
\varphi_0 &:= -(u-s^2)+A(u-s^2)^2+2B(u-s^2)(v-st)+C(v-st)^2,\\
\varphi_1 &:= -(n+1)+3A(u-s^2)+4B(v-st)+C(a^2-t^2),\\
\varphi_2 &:= A(v-st)+B(a^2-t^2),\\
\varphi_3 &:=-(v-st)+A(u-s^2)(v-st)+B\big((v-st)^2+(u-s^2)(a^2-t^2)\big)+C(v-st)(a^2-t^2),\\
\varphi_4 &:= -(a^2-t^2)+A(v-st)^2+2B(v-st)(a^2-t^2)+C(a^2-t^2)^2,\\
\varphi_5 &:= B(u-s^2)+C(v-st),\\
\varphi_6 &:= -(n+1)+A(u-s^2)+4B(v-st)+3C(a^2-t^2),
\end{align*}
and
\begin{align*}
A &:= \zeta +\tau\omega^2 +\nu b_1^2+\sigma d_1^2+\kappa e_1^2+\alpha f_1^2,\\
B &:= \nu b_1^2+\sigma d_1d_3+\kappa e_1e_3+\alpha f_1f_3,\\
C &:= \nu +\sigma d_3^2+\kappa e_3^2+\alpha f_3^2.
\end{align*}
By \eqref{J}, it is easy to see that $F$ is a weakly Landsberg metric if and only if $H=K=0$.
\begin{cor}
Let $F=u\phi(r,s)$ be a spherically symmetric Finsler metric on a domain  $\mathcal{U}\subseteq\mathbb{R}^n$. Then the mean Landsberg curvature $F$ can be written as follows
\begin{align}
J_i=H(x^i-s\frac{y^i}{u}),
\end{align}
where
\begin{align*}
H:=-\frac{(r^2-s^2)(\phi -s\phi_s)L_1+\big[(n+1)(\phi -s\phi_s)+(n-2)(r^2-s^2)\phi_{ss}\big]L_2}{2(\phi -s\phi_s)\big[\phi -s\phi_s +(r^2-s^2)\phi_{ss}\big]}.
\end{align*}
\end{cor}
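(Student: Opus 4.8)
The plan is to obtain this Corollary as a specialization of the general formula \eqref{J} for the mean Landsberg curvature to the case $a=0$, in which $F=r\phi(u,s,v,t)$ degenerates to the spherically symmetric metric $F=u\phi(r,s)$ (in the latter convention $u=|y|$, $r=|x|$, $s=\langle x,y\rangle/|y|$). Setting $a=0$ forces $v=\langle a,x\rangle=0$, $t=\langle a,y\rangle/|y|=0$, $a^i=0$ and $a^2=|a|^2=0$, so that the two mixed invariants vanish: $v-st=0$ and $a^2-t^2=0$. These vanishings are what collapse the general expression to the one-term form stated.

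First I would dispose of the $K$-term in \eqref{J}. Since $a^i=0$ and $t=0$, the factor $a^i-t\,y^i/r$ is identically zero, so $J_i=H\big(x^i-s\,y^i/r\big)$; rewriting $y^i/r=y^i/|y|=y^i/u$ in the spherically symmetric convention yields $J_i=H\big(x^i-s\,y^i/u\big)$, which is the claimed form. It then remains only to reduce the scalar $H$.

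Next I would simplify the coefficients $\varphi_0,\dots,\varphi_6$ under $v-st=0$ and $a^2-t^2=0$. Every term of $\varphi_2$, $\varphi_3$ and $\varphi_4$ carries a factor $v-st$ or $a^2-t^2$, so $\varphi_2=\varphi_3=\varphi_4=0$, while $\varphi_0=-(u-s^2)\big[1-A(u-s^2)\big]$ and $\varphi_1=-(n+1)+3A(u-s^2)$. Hence $H=\tfrac{\phi}{2c_0}\big\{\varphi_0 L_1+\varphi_1 L_2\big\}$, and only $L_1,L_2$ survive. I would then substitute the specializations already recorded for the spherically symmetric case: from \eqref{eq110} one has $\nu=\sigma=\kappa=\alpha=0$, whence $A=\mathfrak{A}=\zeta+\tau\omega^2$, and by \eqref{eq111} $A=\phi_{ss}/\big[\phi-s\phi_s+(r^2-s^2)\phi_{ss}\big]$; moreover $c_0=\phi^2-s\phi\phi_s=\phi(\phi-s\phi_s)$ and $u-s^2=r^2-s^2$ under the change of convention.

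The remaining work is the algebraic simplification. Writing $\Delta:=\phi-s\phi_s+(r^2-s^2)\phi_{ss}$, the two identities $1-A(r^2-s^2)=(\phi-s\phi_s)/\Delta$ and $-(n+1)+3A(r^2-s^2)=-\big[(n+1)(\phi-s\phi_s)+(n-2)(r^2-s^2)\phi_{ss}\big]/\Delta$ turn $\varphi_0$ and $\varphi_1$ into the required rational expressions; inserting these together with $c_0=\phi(\phi-s\phi_s)$ and cancelling the common factor $\phi$ gives exactly the stated formula for $H$. The main care point is purely bookkeeping: the two competing conventions ($r=|y|,\,u=|x|^2$ in the general setting versus $u=|y|,\,r=|x|$ in the spherically symmetric corollary) must be tracked consistently through the substitutions $u-s^2\mapsto r^2-s^2$ and $y^i/r\mapsto y^i/u$, and one should verify that the quantities $\zeta,\tau,\omega$ entering $A$ indeed reduce to \eqref{eq107}--\eqref{eq109} when $a=0$; once these identifications are in place the reduction is routine.
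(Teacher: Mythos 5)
Your proposal is correct and takes essentially the same route as the paper: both specialize \eqref{J} at $a=0$ (so $v-st=a^2-t^2=0$, which kills the $K$-term together with $\varphi_2,\varphi_3,\varphi_4$), then substitute $\zeta+\tau\omega^2=\phi_{ss}/\big[\phi-s\phi_s+(r^2-s^2)\phi_{ss}\big]$ from \eqref{eq111} and $c_0=\phi(\phi-s\phi_s)$ to reach the stated $H$. Your version is merely more explicit than the paper's about why $A=\zeta+\tau\omega^2$ (via \eqref{eq110}), about the rational-form identities for $\varphi_0,\varphi_1$, and about the $u\leftrightarrow r$ convention swap, all of which the paper leaves implicit.
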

\begin{proof}
By \eqref{J} together with the fact that $a=0$ in \eqref{eq3}, we have
\begin{align*}
J_	i=H(x^i-s\frac{y^i}{u}),
\end{align*}
where
\begin{align*}
H=\frac{\phi}{2c_0}\Big\lbrace\big[-(r^2-s^2)+(r^2-s^2)^2(\zeta +\tau\omega^2)\big]L_1+\big[-(n+1)+3(r^2-s^2)(\zeta +\tau\omega^2]\big)L_2\Big\rbrace.
\end{align*}
On the other hand
\begin{align}\label{K}
\zeta +\tau\omega^2 =\frac{\phi_{ss}}{[\phi -s\phi_s +(r^2-s^2)\phi_{ss}]}.
\end{align}
Substituting \eqref{K} in $H$ implies the result.
\end{proof}

\bigskip

Now, we are ready to prove Theorem \ref{THM2}.

\bigskip

\noindent
{\bf Proof of Theorem \ref{THM2}:} The following holds
\begin{align}
J_{i|j}=\frac{\partial J_i}{\partial x^j}-\frac{\partial J_i}{\partial y^m}\frac{\partial G^m}{\partial y^i}-J_m\frac{\partial ^2G^m}{\partial y^i\partial y^j}.
\end{align}
Thus the mean stretch curvature tensor can be written as follows
\begin{align}\label{0}
\bar{\Sigma} _{ij}=2\bigg(\frac{\partial J_i}{\partial x^j}-\frac{\partial J_j}{\partial x^i}\bigg)-2\bigg(\frac{\partial J_i}{\partial y^m}\frac{\partial G^m}{\partial y^j}-\frac{\partial J_j}{\partial y^m}\frac{\partial G^m}{\partial y^i}\bigg).
\end{align}
Differentiating \eqref{eq12} and \eqref{J} with respect to $x^j$ and $y^m$ respectively, one obtains
\begin{eqnarray}
\nonumber \frac{\partial J_i}{\partial x^j}\!\!&=&\!\!\Big(2K_ux^j+K_s\frac{y^j}{r}+K_va^j\Big)\Big(x^i-s\frac{y^i}{r}\Big)+\Big(2H_ux^j+H_s\frac{y^j}{r}+H_va^j\Big)\Big(a^i-t\frac{y^i}{r}\Big)\\
\!\!&&\!\! +K\bigg(\delta^i_j -\frac{y^i}{r}\frac{y^j}{r}\bigg),\\
\nonumber \frac{\partial J_i}{\partial y^m}\!\!&=&\!\!\-\frac{1}{r}\Big(\delta ^i_m-\frac{y^i}{r}\frac{y^m}{r}\Big)(sK+tH)+\frac{1}{r}\Big(x^m-s\frac{y^m}{r}\Big)\Big(K_s(x^i-s\frac{y^i}{r})+H_s(a^i-t\frac{y^i}{r})-K\frac{y^i}{r}\Big)\\
\!\!&&\!\! +\frac{1}{r}\Big(a^m-t\frac{y^m}{r}\Big)\Big(K_t(x^i-s\frac{y^i}{r})+H_t(a^i-t\frac{y^i}{r})-H\frac{y^i}{r}\Big),\\
\nonumber \frac{\partial G^m}{\partial y^j}\!\!&=&\!\!\ rP\delta ^m_j+\Big(P_s\frac{y^m}{r}+Q_sx^m+R_sa^m\Big)\Big(rx^j-sy^j\Big)+\Big(P_t\frac{y^m}{r}+Q_tx^m+R_ta^m\Big)\Big(ra^j-ty^j\Big)\\
 \!\!&&\!\! +\Big(Py^m+2rQx^m+2rRa^m\Big)\frac{y^j}{r}\label{1}.
\end{eqnarray}
It follows from \eqref{0} - \eqref{1} that
\begin{align*}
\bar{\Sigma}_{ij}=\frac{2}{r}\Big[rT(x^ia^j-x^ja^i)+(sT+Z)(a^iy^j-a^jy^i)+(W-tT)(x^iy^j-x^jy^i)\Big],
\end{align*}
where
\begin{eqnarray}
\nonumber T\!\!&:=&\!\! K_v-2H_u+(H_s-K_t)P+(sK+tH)(Q_t-R_s)+(Q_sH_s-Q_tK_s)(u-s^2)\\
\nonumber\!\!&&\!\! +(R_sH_s-R_tK_s+Q_sH_t-Q_tK_t)(v-st)+(R_sH_t-R_tK_t)(a^2-t^2),\\
\nonumber Z\!\!&:=&\!\! 2sH_u+tH_v+H_s-HP+2(sK+tH)R-(2QH_s+Q_tK)(u-s^2)\\
\nonumber\!\!&&\!\! -(2RH_s+R_tK+2QH_t+Q_tH)(v-st)-(2RH_t+R_tH)(a^2-t^2),\\
\nonumber W\!\!&:=&\!\! 2sK_u+tK_v+K_s-KP+2(sK+tH)Q-(2QK_s+Q_sK)(u-s^2)\\
\nonumber\!\!&&\!\! -(2RK_s+R_sK+2Qk_t+Q_sH)(v-st)-(2RK_t+R_sH)(a^2-t^2).
\end{eqnarray}
This completes the proof.
\qed


\end{document}